\newenvironment{conjecture*}[1][]{\textbf{Conjecture #1\hspace{.3em}}}{}
\newenvironment{theorem*}[1]{\textbf{#1}\itshape \hspace{.3em}}{\upshape}
\newenvironment{remark*}[1]{\textbf{#1}\itshape \hspace{.3em}}{\upshape}
\newenvironment{example*}[1]{\textbf{#1}\itshape \hspace{.3em}}{\upshape}
\newenvironment{proof}[1][]{\textbf{Proof #1\hspace{.3em}}}{}
\newtheorem{definition}{Definition}[section]
\newtheorem{theorem}[definition]{Theorem}
\newtheorem{lemma}[definition]{Lemma}
\newtheorem{corollary}[definition]{Corollary}
\newcounter{kpremark}
\newcommand{\mod}[1]{\ensuremath{\hspace{.5em}(#1)}}
\newcommand{\nequiv}{\ensuremath{\equiv\hspace{-1em}/\ \,}}
\newcommand{\pr}{\mathrm{pr}\ }
\begin{document}

\begin{frontmatter}

% Title, authors and addresses

% use the thanksref command within \title, \author or \address for footnotes;
% use the corauthref command within \author for corresponding author footnotes;
% use the ead command for the email address,
% and the form \ead[url] for the home page:
% \title{Title\thanksref{label1}}
% \thanks[label1]{}
% \author{Name\corauthref{cor1}\thanksref{label2}}
% \ead{email address}
% \ead[url]{home page}
% \thanks[label2]{}
% \corauth[cor1]{}
% \address{Address\thanksref{label3}}
% \thanks[label3]{}

%\title{}

% use optional labels to link authors explicitly to addresses:
% \author[label1,label2]{}
% \address[label1]{}
% \address[label2]{}

%\author{}

\title{A characteristic 2 recurrence related to $\bm U_{3}$, with a Hecke algebra application}
\author{Paul Monsky}

\address{Brandeis University, Waltham MA  02454-9110, USA. monsky@brandeis.edu}

\begin{abstract}
I begin with a simple modular form motivated proof of the following: Let $C_{n}$ in $Z/2[[t]]$ be defined by $C_{n+4} = C_{n+3} + (t^{4}+t^{3}+t^{2}+t)C_{n} + t^{n}(t^{2}+t)$, with initial values $0$, $1$, $t$ and $t^{2}$ for $C_{0}$, $C_{1}$, $C_{2}$ and $C_{3}$. Then every $C_{4m}$ is a sum of $C_{k}$ with $k<4m$.

This, combined with earlier results, yields: If $K$ consists of all mod $2$ modular forms of level $\Gamma_{0}(3)$ annihilated by $U_{2}$ and $U_{3} +I$, then $K$ has a basis adapted (in the sense of Nicolas and Serre) to the Hecke operators $T_{7}$ and $T_{13}$; consequently the Hecke algebra attached to $K$ is a power series ring in these two operators.

%\vspace{2ex}
\end{abstract}

%\begin{keyword}
% keywords here, in the form: keyword \sep keyword

% PACS codes here, in the form: \PACS code \sep code
%\PACS 
%\end{keyword}
\end{frontmatter}

% main text

\section{The polynomials $\bm C_{n}$}
\label{section1}

In the finite characteristic theory of modular forms, the Hecke operators $T_{p}$ give rise to remarkable recursions. J.-L. Nicolas and J.-P. Serre \cite{3,4} used recurrences attached to $T_{3}$ and $T_{5}$ to show that the characteristic $2$ level $1$ Hecke algebra is a power series ring in these two operators. In \cite{2} I gave a level $3$ variant of the Nicolas-Serre results; my underlying space was a subquotient, $N2/N1$, of the space of mod $2$ modular forms of level $\Gamma_{0}(3)$, and the resulting Hecke algebra contained nilpotents.

In fact, the Hecke operators $U_{p}$, where $p$ divides the level, also give interesting recurrences. I begin this note with one such recurrence, coming from $U_{3}$ in characteristic $2$ and level $3$. Section \ref{section1} introduces $C_{n}$, $n\ge 0$, in $Z/2[t]$, satisfying the recurrence. Section \ref{section2} proves a basic property---each $C_{4m}$ is a $Z/2$-linear combination of $C_{k}$ with $k < 4m$.  This is built on in section \ref{section3}.  In the final section I combine my results with results from \cite{2} to show the following. Let $K$ consist of the mod $2$ modular forms of level $\Gamma_{0}(3)$ annihilated by $U_{2}$ and $U_{3}+I$. Then $K$ ``admits a basis adapted to $T_{7}$ and $T_{13}$'', and the Hecke algebra attached to $K$ is a power series ring in these two operators.

\begin{remark*}{Remark}
Sections \ref{section1}--\ref{section3} use nothing more than algebra in a polynomial ring $Z/2[r]$. The arguments I make are really modular forms arguments in transparent disguise---$Z/2[r]$ is really a ring of modular forms, $F$ and $G$ are really the mod $2$ reductions of $\Delta(z)$ and $\Delta(3z)$, $U : Z/2[r] \rightarrow Z/2[r]$ is really $U_{3}$, and $T : Z/2[F] \rightarrow Z/2[F]$ is really $T_{3}$, but all this can be ignored. The final section, however, does require some acquaintance with \cite{2} and with modular forms.
\end{remark*}

%\end{document}

\begin{definition}
\label{def1.1}
$F$ and $G$ are the elements $r(r+1)^{3}$ and $r^{3}(r+1)$ of $Z/2[r]$. Note that $F+G = r(r+1)$ and that $F^{4}+G^{4} +FG = 0$.
\end{definition}

\begin{definition}
\label{def1.2}
$\varphi : Z/2[r] \rightarrow Z/2[r]$ is ``semi-linear'' if it is $Z/2$-linear and $\varphi(Gf)=F\varphi (f)$.
\end{definition}

Since a basis of $Z/2[r]$ as module over $Z/2[G] = Z/2[r^{4}+r^{3}]$ is ${1, r, r^{2}, r^{3}}$, a semi-linear map is determined by the images of these $4$ elements, and for any choice of $4$ elements to be images, there is a corresponding semi-linear map.

\begin{definition}
\label{def1.3}
$U : Z/2[r] \rightarrow Z/2[r]$ is the semi-linear map taking $1$, $r$, $r^{2}$ and $r^{3}$ to $1$, $r$, $r^{2}$ and $r^{3}+r^{2}+r$. Since $U(1)=1$, $U$ takes $G^{n}$ to $F^{n}$.
\end{definition}

\begin{lemma}
\label{lemma1.4}
$U(f^{2})=(U(f))^{2}$.
\end{lemma}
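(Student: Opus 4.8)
The plan is to exploit the fact that in characteristic $2$ the squaring map is additive, so that both $f \mapsto U(f^{2})$ and $f \mapsto (U(f))^{2}$ are $Z/2$-additive functions of $f$; I can then reduce the identity to a finite check on the four basis elements $1, r, r^{2}, r^{3}$.

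First I would set $V_{1}(f) = U(f^{2})$ and $V_{2}(f) = (U(f))^{2}$ and record that both are additive over $Z/2$: indeed $(f+g)^{2} = f^{2}+g^{2}$ in characteristic $2$, and $U$ is $Z/2$-linear. Next I would show that each $V_{i}$ is ``$G \to F^{2}$ semi-linear'', i.e. $V_{i}(Gf) = F^{2}V_{i}(f)$. For $V_{1}$ this follows by iterating the semi-linearity of $U$ once: $U((Gf)^{2}) = U(G^{2}f^{2}) = F^{2}U(f^{2})$. For $V_{2}$ it follows by squaring: $(U(Gf))^{2} = (FU(f))^{2} = F^{2}(U(f))^{2}$.

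Since $\{1, r, r^{2}, r^{3}\}$ is a basis of $Z/2[r]$ over $Z/2[G]$, any additive map $V$ satisfying $V(Gf) = F^{2}V(f)$ is determined by its values on these four elements, because $V(p(G)r^{i}) = p(F)^{2}V(r^{i})$ for $p \in Z/2[x]$. Hence it suffices to verify $V_{1}(r^{i}) = V_{2}(r^{i})$, i.e.
\[
U(r^{2i}) = (U(r^{i}))^{2}, \qquad i = 0,1,2,3.
\]
The cases $i = 0, 1$ are immediate from the defining values $U(1)=1$, $U(r)=r$, $U(r^{2})=r^{2}$. For $i = 2, 3$ I would compute $U(r^{4})$ and $U(r^{6})$ by first rewriting these monomials over $Z/2[G]$ using $G = r^{4}+r^{3}$ --- so $r^{4} = G + r^{3}$, and after reducing $r^{5}$ and $r^{6}$ similarly $r^{6} = G(r^{2}+r+1) + r^{3}$ --- and then applying semi-linearity to replace each power of $G$ by the corresponding power of $F$. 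A short calculation with $F = r^{4}+r^{3}+r^{2}+r$ should then give $U(r^{4}) = r^{4} = (U(r^{2}))^{2}$ and $U(r^{6}) = r^{6}+r^{4}+r^{2} = (U(r^{3}))^{2}$.

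The main obstacle here is conceptual rather than computational: one must recognize that squaring is an additive (Frobenius) operation, so that both sides are additive in $f$, and that this additivity together with the induced twisted semi-linearity $V(Gf) = F^{2}V(f)$ genuinely reduces an identity about all of $Z/2[r]$ to four explicit cases. Once that reduction is in place, the verification on $1, r, r^{2}, r^{3}$ is a routine computation in the polynomial ring.
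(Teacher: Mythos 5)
Your proposal is correct and follows essentially the same route as the paper: both reduce the identity to the four basis elements $1, r, r^{2}, r^{3}$ of $Z/2[r]$ over $Z/2[G]$ (you simply spell out the additivity and twisted semi-linearity $V(Gf)=F^{2}V(f)$ that the paper leaves implicit in its appeal to the basis and to $U(G)=F$), and then verify $U(r^{4})=r^{4}$ and $U(r^{6})=r^{6}+r^{4}+r^{2}$ by the same kind of explicit rewriting over $Z/2[G]$. Your computations check out.
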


\begin{proof}
Since ${1, r, r^{2}, r^{3}}$ is a basis of $Z/2[r]$ over $Z/2[G]$, and $U(G)=F$, it suffices to prove this when $f$ is $1$, $r$, $r^{2}$ and $r^{3}$. The cases $f=1$ and $f=r$ are immediate. And:
\begin{itemize}
\item[] $r^{4} = G+r^{3}$. So $U(r^{4})=F+(r^{3}+r^{2}+r)=r^{4}=(U(r^{2}))^{2}$.
\item[] $r^{6} = (r^{2}+r)G+r^{4}$. So $U(r^{6})=(r^{2}+r)F+r^{4}=r^{6}+r^{2}+r^{4}=(U(r^{3}))^{2}$.\qed
\end{itemize}
\end{proof}

\begin{lemma}
\label{lemma1.5} \hspace{1em}\\%to wrap line
\vspace{-4.5ex}
\begin{enumerate}
\item[(a)] $U(r^{n+4})=U(r^{n+3})+(r^{4}+r^{3}+r^{2}+r)U(r^{n})$.
\item[(b)] $U((r^{2}+r)r^{2n})=(r^{2}+r)A_{n}(r^{2})$ for some $A_{n}$ in $Z/2[t]$.
\end{enumerate}
\end{lemma}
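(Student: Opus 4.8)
The plan is to prove (a) by exploiting the single polynomial relation $r^{4}=G+r^{3}$ together with the semi-linearity of $U$, and then to deduce (b) from the recurrence in (a) by a telescoping cancellation, using Lemma~\ref{lemma1.4}.

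For (a), I would start from $G=r^{3}(r+1)=r^{4}+r^{3}$, so that $r^{4}=G+r^{3}$ in characteristic $2$. Multiplying by $r^{n}$ gives $r^{n+4}=Gr^{n}+r^{n+3}$. Applying $U$ and using both $Z/2$-linearity and the defining property $U(Gf)=F\,U(f)$ yields $U(r^{n+4})=F\,U(r^{n})+U(r^{n+3})$. It then only remains to observe that $F=r(r+1)^{3}=r^{4}+r^{3}+r^{2}+r$, which is exactly the coefficient appearing in (a); so this part is essentially a one-line computation.

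For (b), write $a_{m}=U(r^{m})$ and $W_{n}=U((r^{2}+r)r^{2n})=a_{2n+2}+a_{2n+1}$ (using $Z/2$-linearity and $(r^{2}+r)r^{2n}=r^{2n+2}+r^{2n+1}$). Applying the recurrence of (a) with the index chosen so that $m+4=2n+2$, i.e. $m=2n-2$, gives $a_{2n+2}=a_{2n+1}+F\,a_{2n-2}$; hence the two odd-indexed terms cancel in characteristic $2$ and $W_{n}=F\,a_{2n-2}$ for $n\ge 1$ (the case $n=0$ being checked directly from $U(1)=1$, $U(r)=r$, $U(r^{2})=r^{2}$). The crucial structural point is now the factorization $F=(r^{2}+r)(r^{2}+1)$, valid since $r^{2}+1=(r+1)^{2}$ in characteristic $2$. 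By Lemma~\ref{lemma1.4}, $a_{2n-2}=U((r^{n-1})^{2})=(U(r^{n-1}))^{2}$ is a square, and every square in $Z/2[r]$ is a polynomial in $r^{2}$ (the Frobenius identity $(\sum c_{i}r^{i})^{2}=\sum c_{i}r^{2i}$ over $Z/2$). Therefore $W_{n}=(r^{2}+r)\bigl[(r^{2}+1)(U(r^{n-1}))^{2}\bigr]$, the bracketed factor is a polynomial in $r^{2}$, and this exhibits the desired $A_{n}$ with $A_{n}(r^{2})=(r^{2}+1)(U(r^{n-1}))^{2}$.

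Neither part presents a serious obstacle; the work lies in spotting the right relations. The one step requiring care is the telescoping in (b): one must index the recurrence so that the two odd-indexed terms coincide and cancel, leaving the single product $F\,a_{2n-2}$. Once that cancellation is in hand, the factorization of $F$ and the observation that squares lie in $Z/2[r^{2}]$ finish the argument immediately.
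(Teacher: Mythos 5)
Your proof is correct and follows essentially the same route as the paper: part (a) from $r^{4}=G+r^{3}$ and semi-linearity, and part (b) by the telescoping $U((r^{2}+r)r^{2n})=F\,U(r^{2n-2})$ combined with Lemma~\ref{lemma1.4} and the factorization $F=(r^{2}+r)(r^{2}+1)$. Your $A_{n}(r^{2})=(r^{2}+1)(U(r^{n-1}))^{2}$ is exactly the paper's $A_{n}=(t+1)g(t)$ with $g(r)=U(r^{n-1})$.
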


\begin{proof}
$r^{4}=r^{3}+G$. Multiplying by $r^{n}$, applying $U$ and using semi-linearity we get (a). When $n=0$, we have (b) with $A_{0}=1$. Suppose $n\ge 1$. Then by (a) and Lemma \ref{lemma1.4}, $U((r^{2}+r)r^{2n})=(r^{4}+r^{3}+r^{2}+r)U(r^{2n-2})=(r^{2}+r)(r^{2}+1)(U(r^{n-1}))^{2}$.  So if we write $U(r^{n-1})$ as $g(r)$ and set $A_{n} = (t+1)g(t)$, we get (b).\qed
\end{proof}

\begin{lemma}
\label{lemma1.6} Let $A_{n}$ be as in Lemma \ref{lemma1.5}. Then:
\begin{enumerate}
\item[(a)] $A_{n+4} = A_{n+3} + (t^{4}+t^{3}+t^{2}+t)A_{n}$.
\item[(b)] $A_{0}$, $A_{1}$, $A_{2}$ and $A_{3}$ are $1$, $t+1$, $t^{2}+t$ and $t^{3}+t^{2}$.
\end{enumerate}
\end{lemma}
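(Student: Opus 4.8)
The plan is to read everything off Lemma~\ref{lemma1.5} and its proof, treating the identity in part~(a) of that lemma as a relation among honest polynomials. Let me write $h_n(t) \in Z/2[t]$ for the polynomial obtained from $U(r^n)$ by renaming the variable $r$ to $t$. The proof of Lemma~\ref{lemma1.5}(b) exhibits $A_n = (t+1)\,h_{n-1}$ for every $n \ge 1$, while $A_0 = 1$ is recorded separately there. This single relation should yield both parts.

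For part~(b) I would simply feed in the values of $U$ supplied by Definition~\ref{def1.3}. There $U(1)=1$, $U(r)=r$ and $U(r^2)=r^2$, so $h_0 = 1$, $h_1 = t$, $h_2 = t^2$, and hence $A_1 = (t+1)\cdot 1 = t+1$, $A_2 = (t+1)t = t^2+t$, $A_3 = (t+1)t^2 = t^3+t^2$; together with $A_0 = 1$ these are the four values claimed.

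For part~(a) the key observation is that Lemma~\ref{lemma1.5}(a) is an equality in $Z/2[r]$ between the polynomials $U(r^{n+4})$, $U(r^{n+3})$ and $U(r^n)$, so after the variable renaming $r \mapsto t$ (a ring isomorphism) it becomes the polynomial recurrence
\[
h_{n+4} = h_{n+3} + (t^4+t^3+t^2+t)\,h_n \qquad (n \ge 0).
\]
Multiplying through by $(t+1)$ and invoking $A_{m+1} = (t+1)\,h_m$ turns this into
\[
A_{n+5} = A_{n+4} + (t^4+t^3+t^2+t)\,A_{n+1} \qquad (n \ge 0),
\]
which is precisely the recurrence of part~(a) for all indices $\ge 1$. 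The lone index not covered, namely the running index $0$, I would verify by hand: $A_4 = (t+1)h_3 = (t+1)(t^3+t^2+t) = t^4+t$, whereas $A_3 + (t^4+t^3+t^2+t)A_0 = (t^3+t^2)+(t^4+t^3+t^2+t) = t^4+t$, so the recurrence holds there as well.

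The only point requiring attention is this index bookkeeping. Because the tidy formula $A_n = (t+1)h_{n-1}$ is valid only for $n \ge 1$, with $A_0 = 1$ an isolated initial value, multiplying the $h$-recurrence by $(t+1)$ certifies the $A$-recurrence only from index $1$ onward, and the base case must be confirmed directly. Beyond this I anticipate no genuine obstacle: all the substance is already contained in Lemma~\ref{lemma1.5}.
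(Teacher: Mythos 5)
Your proof is correct, and part (b) is exactly the paper's argument; but for part (a) you take a genuinely different (though closely related) route. The paper squares the defining relation to get $r^{8}=r^{6}+G^{2}$, multiplies by $(r^{2}+r)r^{2n}$, and applies $U$: semi-linearity pulls the factor $G^{2}$ out as $F^{2}=r^{8}+r^{6}+r^{4}+r^{2}$, and after cancelling $r^{2}+r$ one reads off $A_{n+4}(r^{2})=A_{n+3}(r^{2})+(r^{8}+r^{6}+r^{4}+r^{2})A_{n}(r^{2})$ uniformly for all $n\ge 0$, with no base case to patch. You instead start from the recurrence for $U(r^{n})$ in Lemma \ref{lemma1.5}(a), multiply by $t+1$, and transport it to the $A_{n}$ via $A_{n}=(t+1)h_{n-1}$; since that identity only holds for $n\ge 1$ (with $A_{0}=1$ an isolated value not divisible by $t+1$), you correctly recognize that this certifies the recurrence only from index $1$ on and verify $A_{4}=A_{3}+(t^{4}+t^{3}+t^{2}+t)A_{0}=t^{4}+t$ by hand. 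Both derivations ultimately rest on $r^{4}=r^{3}+G$; the paper's squaring trick buys uniformity in $n$, while yours reuses already-proved statements at the cost of the index bookkeeping you flag. Your computations ($h_{3}=t^{3}+t^{2}+t$, $A_{4}=(t+1)(t^{3}+t^{2}+t)=t^{4}+t$) check out.
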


\begin{proof}
$r^{8}=r^{6}+G^{2}$. Multiplying by $(r^{2}+r)r^{2n}$, applying $U$, and then dividing by $r^{2}+r$, we find that $A_{n+4}(r^{2})=A_{n+3}(r^{2})+(r^{8}+r^{6}+r^{4}+r^{2})A_{n}(r^{2})$, giving (a). $A_{0}$ is evidently $1$. Since $U$ fixes $1$, $r$ and $r^{2}$, the last sentence in the proof of Lemma \ref{lemma1.5} shows that $A_{1}$, $A_{2}$ and $A_{3}$ are $t+1$, $t^{2}+t$ and $t^{3}+t^{2}$.\qed
\end{proof}

\begin{definition}
\label{def1.7}
$C_{n}$ in $Z/2[t]$ is $A_{n}+t^{n}$, with $A_{n}$ as in Lemma \ref{lemma1.5}.
\end{definition}

\begin{theorem}%[]
\label{theorem1.8} \hspace{1em}\\%to wrap line
\vspace{-5ex}
\begin{enumerate}
\item[(a)] $C_{n+4} = C_{n+3} + (t^{4}+t^{3}+t^{2}+t)C_{n}+t^{n}(t^{2}+t)$.
\item[(b)] $C_{0}$, $C_{1}$, $C_{2}$ and $C_{3}$ are $0$, $1$, $t$ and $t^{2}$.
\item[(c)] $U+I$ takes $(r^{2}+r)r^{2n}$ to $(r^{2}+r)C_{n}(r^{2})$.
\end{enumerate}

\end{theorem}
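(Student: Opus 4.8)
The plan is to derive each of the three parts of Theorem~\ref{theorem1.8} directly from the results already established for the sequence $A_{n}$, since by Definition~\ref{def1.7} we have $C_{n}=A_{n}+t^{n}$. The strategy is to substitute this definition into the recurrence and initial conditions for $A_{n}$ from Lemma~\ref{lemma1.6}, track the extra $t^{n}$ terms carefully, and then separately interpret part~(c) via the operator $U$.

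First I would prove (b). By Definition~\ref{def1.7}, $C_{k}=A_{k}+t^{k}$, so I simply add the values of $A_{0},A_{1},A_{2},A_{3}$ from Lemma~\ref{lemma1.6}(b) to $t^{0},t^{1},t^{2},t^{3}$ respectively. Since we are in characteristic $2$, this gives $C_{0}=1+1=0$, $C_{1}=(t+1)+t=1$, $C_{2}=(t^{2}+t)+t^{2}=t$, and $C_{3}=(t^{3}+t^{2})+t^{3}=t^{2}$, as claimed.

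Next I would establish (a). Starting from the known recurrence $A_{n+4}=A_{n+3}+(t^{4}+t^{3}+t^{2}+t)A_{n}$ in Lemma~\ref{lemma1.6}(a), I would substitute $A_{k}=C_{k}+t^{k}$ throughout. The left side becomes $C_{n+4}+t^{n+4}$. The right side becomes $C_{n+3}+t^{n+3}+(t^{4}+t^{3}+t^{2}+t)(C_{n}+t^{n})$. Solving for $C_{n+4}$ and collecting the pure-power-of-$t$ terms, the discrepancy reduces to $t^{n+3}+(t^{4}+t^{3}+t^{2}+t)t^{n}+t^{n+4}=t^{n+4}+t^{n+3}+t^{n+2}+t^{n+1}+t^{n+4}+t^{n+3}$. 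In characteristic $2$ the two $t^{n+4}$ terms and the two $t^{n+3}$ terms cancel, leaving $t^{n+2}+t^{n+1}=t^{n}(t^{2}+t)$, which is exactly the inhomogeneous term in (a). I expect this bookkeeping cancellation to be the only delicate point, and it is routine once one is careful with signs (which vanish in characteristic $2$).

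Finally, for (c) I would combine the definition of $A_{n}$ with the identity $C_{n}=A_{n}+t^{n}$. By Lemma~\ref{lemma1.5}(b), $U((r^{2}+r)r^{2n})=(r^{2}+r)A_{n}(r^{2})$. Applying the identity operator $I$ directly, $I((r^{2}+r)r^{2n})=(r^{2}+r)r^{2n}=(r^{2}+r)(r^{2})^{n}$, so that the coefficient attached to $(r^{2}+r)$ is $t^{n}$ evaluated at $t=r^{2}$. Adding the two contributions, $(U+I)((r^{2}+r)r^{2n})=(r^{2}+r)\big(A_{n}(r^{2})+(r^{2})^{n}\big)=(r^{2}+r)C_{n}(r^{2})$, which is (c). The main conceptual content is recognizing that the role of the $t^{n}$ shift in passing from $A_{n}$ to $C_{n}$ is precisely to encode the addition of the identity operator $I$; no genuine obstacle arises, as everything follows formally from the already-proved lemmas.
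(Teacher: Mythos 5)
Your proposal is correct and follows exactly the route the paper intends: the paper's proof simply states that (a) and (b) are immediate from Lemma~\ref{lemma1.6} and that (c) comes from Lemma~\ref{lemma1.5}(b), and your write-up is just the explicit verification of those three deductions, with the characteristic-$2$ bookkeeping done correctly.
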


\begin{proof}
(a) and (b) are immediate from (a) and (b) of Lemma \ref{lemma1.6}, while (c) comes from Lemma \ref{lemma1.5} (b).\qed
\end{proof}

\begin{lemma}
\label{lemma1.9} 
If $C_{n}$ is a $Z/2$-linear combination of $C_{k}$ with $k<n$, then $4$ divides $n$.
\end{lemma}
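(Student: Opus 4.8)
The plan is to prove the contrapositive: I will show that whenever $4 \nmid n$ the polynomial $C_n$ is \emph{not} a $Z/2$-linear combination of the $C_k$ with $k<n$. The mechanism is pure degree bookkeeping in $Z/2[t]$. The key claim is that $\deg C_n \le n-1$ for every $n \ge 1$, and that in fact $\deg C_n = n-1$ precisely when $4 \nmid n$. Granting this, the lemma is immediate: if $4 \nmid n$ then the monomial $t^{n-1}$ actually occurs in $C_n$, whereas every $C_k$ with $1 \le k < n$ satisfies $\deg C_k \le k-1 \le n-2$ (and $C_0 = 0$), so any $Z/2$-combination $\sum_{k<n} c_k C_k$ has degree at most $n-2$ and cannot contain $t^{n-1}$; hence it cannot equal $C_n$, forcing $4 \mid n$.

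First I would establish the uniform bound $\deg C_n \le n-1$ for $n \ge 1$ by induction, using only the recurrence of Theorem \ref{theorem1.8}. The base cases $C_0 = 0$, $C_1 = 1$, $C_2 = t$, $C_3 = t^2$ are read off directly. For the inductive step, if $\deg C_{n+3} \le n+2$ and $\deg C_n \le n-1$, then in $C_{n+4} = C_{n+3} + (t^4+t^3+t^2+t)C_n + t^n(t^2+t)$ the three summands on the right have degrees at most $n+2$, $n+3$, and $n+2$ respectively, whence $\deg C_{n+4} \le n+3$.

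Next, writing $b_n$ for the coefficient of $t^{n-1}$ in $C_n$, I would extract $b_n$ from the same recurrence by comparing coefficients of $t^{n+3}$. Since $\deg C_{n+3} \le n+2$, the term $C_{n+3}$ contributes nothing to $t^{n+3}$; the inhomogeneous term $t^n(t^2+t)$ has degree $n+2$ and likewise contributes nothing; and in $(t^4+t^3+t^2+t)C_n$ the four candidate products $t^4\cdot t^{n-1}$, $t^3\cdot t^n$, $t^2\cdot t^{n+1}$, $t\cdot t^{n+2}$ involve the coefficients of $t^{n-1},t^n,t^{n+1},t^{n+2}$ in $C_n$, of which only the first, namely $b_n$, is nonzero by the degree bound. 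This yields $b_{n+4} = b_n$, so the sequence $(b_n)$ has period $4$. A direct computation from $C_1,C_2,C_3,C_4 = 1,\,t,\,t^2,\,t$ gives $b_1=b_2=b_3=1$ and $b_4=0$; hence $b_n = 0$ exactly when $4 \mid n$ and $b_n = 1$ otherwise, which is precisely the refined degree statement, and the lemma follows.

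The only step needing genuine care is the coefficient extraction in the last paragraph: the conclusion $b_{n+4}=b_n$ hinges on knowing that the $t^n$-, $t^{n+1}$-, and $t^{n+2}$-coefficients of $C_n$ all vanish, so the degree bound $\deg C_n \le n-1$ must be in hand first. Everything else is routine verification.
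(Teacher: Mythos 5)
Your proof is correct and follows essentially the same route as the paper: both rest on the observation that $\deg C_n = n-1$ exactly when $4 \nmid n$ (and is smaller when $4 \mid n$), established by induction on the recurrence of Theorem \ref{theorem1.8}, after which the degree comparison rules out any dependence. Your version merely makes explicit the coefficient extraction ($b_{n+4}=b_n$) that the paper leaves as "induction using (a) of Theorem 1.8."
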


\begin{proof}
Note first that $\mbox{degree}\ C_{n} = n-1$ if $n\nequiv 0\mod{4}$ and is $<n-1$ if $n\equiv 0\mod{4}$. This follows from (b) of Theorem \ref{theorem1.8} if $n<4$, and from induction using (a) of Theorem 1.8 in general. So if $n\nequiv 0\mod{4}$, $\mbox{degree}\ C_{n} = n-1$ while each $C_{k}$, $k<n$, has degree $<n-1$, and $C_{n}$ is not a $Z/2$-linear combination of such $C_{k}$.\qed
\end{proof}

\section{A key property of the $\bm C_{n}$}
\label{section2}

We shall prove a converse to Lemma \ref{lemma1.9}---each $C_{4m}$ is a $Z/2$-linear combination of $C_{k}$ with $k<4m$. In \cite{1}, I asked whether this was true, and Peter M\"{u}ller, in a computer calculation taking a few seconds, showed that it held for $4m< 10,000$. I then succeeded in finding a proof.

\begin{lemma}
\label{lemma2.1} 
$U$ takes $1$, $F$, $F^{2}$ and $F^{3}$ to $1$, $G$, $G^{2}$ and $G^{3}+F$.
\end{lemma}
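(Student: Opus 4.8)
The plan is to reduce everything to semi-linearity together with Lemma \ref{lemma1.4}, writing $F$ in terms of $G$ and the sum $s := r^{2}+r = F+G$ (Definition \ref{def1.1}). The first three values are quick. Since $U(1)=1$ by definition and $U(G)=F$, and since $U$ fixes $s=r^{2}+r$ (as it fixes $r$ and $r^{2}$), I would compute $U(F)=U(G+s)=U(G)+U(s)=F+s=G$, the last step because $F+s=F+(F+G)=G$ in characteristic $2$. Then $U(F^{2})=(U(F))^{2}=G^{2}$ is immediate from Lemma \ref{lemma1.4}.

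The only real work is $U(F^{3})$. Here I would expand, using $F=G+s$ and the characteristic-$2$ identity $(a+b)^{3}=a^{3}+a^{2}b+ab^{2}+b^{3}$,
\[
F^{3} = G^{3} + G^{2}s + G s^{2} + s^{3}.
\]
Applying $U$ term by term and using semi-linearity ($U(G^{k}h)=F^{k}U(h)$) together with $U(s)=s$ and $U(s^{2})=s^{2}$ (Lemma \ref{lemma1.4}), this gives
\[
U(F^{3}) = F^{3} + F^{2}s + F s^{2} + U(s^{3}).
\]
So the whole lemma comes down to evaluating the single term $U(s^{3})$.

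To compute $U(s^{3})$ I would exploit Lemma \ref{lemma1.5}(b). Since $s^{3}=(r^{2}+r)(r^{4}+r^{2})=(r^{2}+r)r^{4}+(r^{2}+r)r^{2}$, that lemma (with the explicit $A_{1}=t+1$ and $A_{2}=t^{2}+t$ from Lemma \ref{lemma1.6}(b)) yields
\[
U(s^{3}) = (r^{2}+r)\bigl(A_{2}(r^{2}) + A_{1}(r^{2})\bigr) = (r^{2}+r)(r^{4}+1).
\]
A direct check then identifies this as $s^{3}+F$; equivalently $U(s^{3})+s^{3}=(r^{2}+r)(r^{4}+1)+(r^{2}+r)(r^{4}+r^{2})=r^{4}+r^{3}+r^{2}+r=F$.

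Finally I would assemble the pieces. Substituting $U(s^{3})=s^{3}+F$ into the displayed expression for $U(F^{3})$ and recombining the first four terms by the same binomial identity,
\[
U(F^{3}) = \bigl(F^{3} + F^{2}s + F s^{2} + s^{3}\bigr) + F = (F+s)^{3} + F = G^{3} + F,
\]
using $F+s=G$ once more. The step I expect to be the genuine obstacle is the evaluation of $U(s^{3})$: the three ``$G$-divisible'' terms collapse mechanically under semi-linearity, but it is precisely the failure of $U(s^{3})$ to equal $s^{3}$---the correction term $F$---that produces the anomalous summand distinguishing $U(F^{3})$ from the clean pattern $U(F^{k})=G^{k}$ holding for $k\le 2$. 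One might also briefly worry that $U(G^{3})=F^{3}$ reintroduces an $F^{3}$ on the right and makes the computation circular, but this is harmless: the binomial collapse $(F+s)^{3}=G^{3}$ absorbs it cleanly.
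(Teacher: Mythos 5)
Your proof is correct, and the first three values ($U(1)=1$, $U(F)=G$ via $U(F+G)=F+G$, $U(F^2)=G^2$ via Lemma \ref{lemma1.4}) are handled exactly as in the paper. Where you diverge is the one genuinely delicate value, $U(F^3)$. You expand $F^3=(G+s)^3$ with $s=r^2+r$, push $U$ through the three $G$-divisible terms by semi-linearity, and isolate the obstruction in $U(s^3)$, which you then evaluate from Lemma \ref{lemma1.5}(b) using the explicit polynomials $A_1=t+1$, $A_2=t^2+t$; the identity $U(s^3)=s^3+F$ is what produces the anomalous summand $F$. The paper instead observes from Definition \ref{def1.1} the factorizations $F^3=(r+1)^8G$ and $G^3=r^8F$, so that semi-linearity plus $U(r^8)=r^8$ gives $U(F^3)=(r+1)^8F=r^8F+F=G^3+F$ in one line. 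The paper's route is shorter and makes the source of the extra $F$ visible as $(r+1)^8=r^8+1$; yours is more mechanical but has the merit of tying the computation back to the recursion data $A_n$ that drives the rest of Section \ref{section1}, and of displaying $U(s^3)+s^3=F$ as the precise failure of $U$ to respect the "clean" pattern. Both arguments are complete; I verified your intermediate identities $U(s^3)=(r^2+r)(r^4+1)=s^3+F$ directly.
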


\begin{proof}
$U(F+G)=U(r^{2}+r)=r^{2}+r = F+G$. Since $U(G)=F$, $U(F)=G$. So $U(F^{2})=G^{2}$. Also, Definition \ref{def1.1} shows that $F^{3}=(r+1)^{8}G$, and that $G^{3}=r^{8}F$. Since $U(r^{8})=r^{8}$, $U(F^{3})=(r+1)^{8}F=r^{8}F+F=G^{3}+F$.\qed
\end{proof}

\begin{lemma}
\label{lemma2.2} 
Let $\alpha$ be the isomorphism of $Z/2[F]$ with $Z/2[G]$ taking $F^{n}$ to $G^{n}$. Then if $P_{n}$ is either $\alpha (F^{n})=G^{n}$ or $U(F^{n})$:

\[
(\star)\qquad
P_{n+4} + F^{4}P_{n}+FP_{n+1}=0.
\]

\end{lemma}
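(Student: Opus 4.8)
The plan is to treat the two possibilities for $P_n$ separately, reducing each to the single relation $F^4 + G^4 + FG = 0$ of Definition \ref{def1.1}. I observe first that $(\star)$ is exactly the three-term linear recurrence (in $n$, with coefficients in $Z/2[F]$) forced by the polynomial $T^4 + FT + F^4$, of which $G$ is a root: indeed $G^4 + FG + F^4 = 0$ is just a rearrangement of the relation in Definition \ref{def1.1}. So for the first case $P_n = \alpha(F^n) = G^n$ the identity is immediate---I would simply factor $G^n$ out of $G^{n+4} + F^4 G^n + F G^{n+1} = G^n(G^4 + F^4 + FG)$ and invoke Definition \ref{def1.1}.

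The substance lies in the second case, $P_n = U(F^n)$, where $U$ is only semi-linear and not multiplicative, so I cannot argue termwise as above. The idea I would pursue is to rewrite the leading term so that semi-linearity becomes applicable. From $F^4 + G^4 + FG = 0$ I get the factorization $F^4 = G(G^3 + F)$ (in characteristic $2$), hence
\[
F^{n+4} = G \cdot F^n(G^3 + F) = G(F^n G^3 + F^{n+1}).
\]
Applying $U$ and using $U(Gf) = F\,U(f)$ together with $Z/2$-linearity gives
\[
U(F^{n+4}) = F\big(U(F^n G^3) + U(F^{n+1})\big).
\]

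The key step---and the one I expect to be the main obstacle---is evaluating $U(F^n G^3)$. Here I would peel off the three factors of $G$ one at a time, each time pulling out an $F$ by semi-linearity: $U(F^n G^3) = F\,U(F^n G^2) = F^2\,U(F^n G) = F^3\,U(F^n)$. Substituting back yields $U(F^{n+4}) = F^4 U(F^n) + F\,U(F^{n+1})$, which in characteristic $2$ is precisely $(\star)$. Notably this second case needs nothing beyond the defining relation and the semi-linearity of Definition \ref{def1.2}; neither Lemma \ref{lemma1.4} nor Lemma \ref{lemma2.1} is required. The only real care is to recognize the factorization $F^4 = G(G^3+F)$ as the device that converts the obstruction ``$U$ is not multiplicative'' into a repeated application of the one multiplicative rule $U$ does obey.
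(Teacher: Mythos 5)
Your proof is correct and follows essentially the same route as the paper: both cases reduce to the relation $F^{4}+G^{4}+FG=0$ of Definition \ref{def1.1}, multiplied by $G^{n}$ for the first case and combined with repeated use of the semi-linearity $U(Gf)=FU(f)$ for the second. The paper simply multiplies the relation by $F^{n}$ and applies $U$ to the whole identity, whereas you first isolate $F^{n+4}=G(F^{n}G^{3}+F^{n+1})$; this is a presentational difference only.
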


\begin{proof}
As we saw in Definition \ref{def1.1}, $F^{4}+G^{4}+FG = 0$. Multiplying by $G^{n}$ we get ($\star$) with $P_{n}=\alpha(F^{n})=G^{n}$. Multiplying instead by $F^{n}$ and applying the semi-linear operator $U$ we get ($\star$) with $P_{n}=U(F^{n})$.\qed
\end{proof}

\begin{definition}
\label{def2.3}
For $f$ in $Z/2[F]$, $T(f)=U(f)+\alpha(f)$.
\end{definition}

\begin{theorem}%[]
\label{theorem2.4} \hspace{1em}\\%to wrap line
\vspace{-5ex}
\begin{enumerate}
\item[(a)] $T$ takes $1$, $F$, $F^{2}$ and $F^{3}$ to $0$, $0$, $0$ and $F$.
\item[(b)] $T$ stabilizes $Z/2[F]$. In fact $T(F^{n})$ is a sum of $F^{k}$ with $k\le n-2$ and congruent to $n$ mod $2$.
\end{enumerate}
\end{theorem}

\begin{proof}
(a) follows from Lemma \ref{lemma2.1}. In particular the second assertion in (b) holds for $n\le 3$. Now let $P_{n}=T(F^{n})$. By Lemma \ref{lemma2.2} these $P_{n}$ satisfy the recursion ($\star$). An induction on $n$, using ($\star$), completes the proof of (b).\qed
\end{proof}

\begin{lemma}
\label{lemma2.5} 
Let $u_{0}$, $u_{1}$ and $u_{2}$ be $G$, $F$ and $(F+G)^{2}G$. Then:
\begin{enumerate}
\item[(a)] The $u_{i}$ are linearly independent over $Z/2[G]$.
\item[(b)] Each $u_{i}$ is $(r^{2}+r)g(r^{2})$ for some $g$ of degree $\le 3$.
\end{enumerate}

\end{lemma}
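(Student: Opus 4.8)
The plan is to prove both parts by direct computation in $Z/2[r]$, leaning on two facts already in hand: $F+G = r^2+r$ from Definition \ref{def1.1}, and the fact (noted just after Definition \ref{def1.2}) that $\{1,r,r^2,r^3\}$ is a basis of $Z/2[r]$ over $Z/2[G]$, together with the reduction $r^4 = G + r^3$ that it implies. I would dispatch (b) first, since its factorizations feed directly into the coordinate computation needed for (a).

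For (b) I would pull $r^2+r = r(r+1)$ out of each $u_i$ and check that the remaining cofactor is a polynomial in $r^2$ of degree $\le 3$. Since $G = r^3(r+1) = (r^2+r)\,r^2$, I get $u_0 = (r^2+r)\,r^2$; since $F = (r^2+r)(r^2+1)$, I get $u_1 = (r^2+r)(r^2+1)$; and writing $u_2 = (F+G)^2 G = (r^2+r)^2\,(r^2+r)\,r^2 = (r^2+r)(r^6+r^4)$. The cofactors are thus $t$, $t+1$ and $t^3+t^2$, all of degree $\le 3$ in $t = r^2$, which is exactly (b).

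For (a) I would express each $u_i$ in the $Z/2[G]$-basis by reducing every power $r^k$ with $k\ge 4$ via $r^4 = G+r^3$. This yields $u_0 = G$, $u_1 = F = G + r^2 + r$ (immediate from $F+G = r^2+r$), and $u_2 = (r^4+r^2)G = (G+r^3+r^2)G = G^2 + Gr^2 + Gr^3$, i.e. coordinate vectors $(G,0,0,0)$, $(G,1,1,0)$ and $(G^2,0,G,G)$ relative to $1,r,r^2,r^3$. A putative relation $a_0u_0 + a_1u_1 + a_2u_2 = 0$ with $a_i \in Z/2[G]$ then unravels triangularly: the coefficient of $r$ forces $a_1 = 0$; the coefficient of $r^2$ then forces $Ga_2 = 0$, hence $a_2 = 0$; and finally the coefficient of $1$ forces $Ga_0 = 0$, hence $a_0 = 0$.

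Neither part poses a genuine conceptual obstacle; the work is essentially bookkeeping. The only delicate point is the characteristic-$2$ reduction of $u_2$ to the basis (equivalently, the expansion of $(F+G)^2 G$), where it is easy to drop or miscombine a term, so I would double-check that computation against the factored form $(r^2+r)(r^6+r^4)$ obtained in (b). Everything else is mechanical.
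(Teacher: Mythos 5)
Your proof is correct and follows essentially the same route as the paper: part (b) is the identical direct factorization (yielding $g = t$, $t+1$, $t^3+t^2$), and part (a) is the same kind of coordinate check, except that you reduce to the basis $\{1,r,r^2,r^3\}$ over $Z/2[G]$ while the paper simply observes that $u_0, u_1, u_2$ are $G\cdot 1$, $F$, $F^2G+G^3$ and invokes the independence of $1, F, F^2$ over $Z/2[G]$. All your computations (including the expansion $u_2=(G+r^3+r^2)G$) check out.
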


\begin{proof}
Since $1$, $F$ and $F^{2}$ are linearly independent over $Z/2[G]$ we get (a). The $g$ corresponding to $u_{0}$ and $u_{1}$ are $t$ and $t+1$. And since $(F+G)G=(r^{2}+r)(r^{4}+r^{3})=r^{6}+r^{4}$, the $g$ corresponding to $u_{2}$ is $t^{3}+t^{2}$.\qed

Now fix $m\ge 0$.
\end{proof}

\begin{definition}
\label{def2.6}
$L$ is the space spanned by the $u_{i}G^{2n}$ with the $u_{i}$ as above, and $0\le n\le m$. $L^{*}$ consists of the $(r^{2}+r)g(r^{2})$, where $g$ in $Z/2[t]$ has degree $\le 4m+3$.
\end{definition}

\begin{lemma}
\label{lemma2.7} 
$L$ has dimension $3m+3$ and is contained in $L^{*}$.
\end{lemma}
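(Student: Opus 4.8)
The plan is to prove both claims of Lemma~\ref{lemma2.7} separately, starting with the dimension count. The space $L$ is spanned by the $3(m+1) = 3m+3$ elements $u_{i}G^{2n}$ for $i \in \{0,1,2\}$ and $0 \le n \le m$. To see these are linearly independent, I would exploit the fact from Lemma~\ref{lemma2.5}(a) that $u_{0}, u_{1}, u_{2}$ are linearly independent over $Z/2[G]$. A $Z/2$-linear relation among the $u_{i}G^{2n}$ can be regrouped as $\sum_{i} u_{i} h_{i}(G^{2}) = 0$ where each $h_{i}$ lies in $Z/2[s]$ with $\deg h_{i} \le m$; since $G^{2} \in Z/2[G]$, each $h_{i}(G^{2})$ is an element of $Z/2[G]$, so linear independence of the $u_{i}$ over $Z/2[G]$ forces every $h_{i}(G^{2}) = 0$, hence each $h_{i} = 0$ and the relation is trivial. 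This gives $\dim L = 3m+3$.

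For the containment $L \subseteq L^{*}$, I would use Lemma~\ref{lemma2.5}(b), which tells us each $u_{i}$ has the form $(r^{2}+r)g_{i}(r^{2})$ with $\deg g_{i} \le 3$. Since $G = r^{3}(r+1) = r^{4}+r^{3}$, we have $G^{2} = r^{8}+r^{6} = (r^{2})^{4} + (r^{2})^{3}$, so $G^{2n} = (G^{2})^{n}$ is a polynomial in $r^{2}$; write $G^{2n} = q_{n}(r^{2})$ where $q_{n}$ has degree $4n$ in $t$. Then $u_{i}G^{2n} = (r^{2}+r)\,g_{i}(r^{2})\,q_{n}(r^{2}) = (r^{2}+r)\,(g_{i}q_{n})(r^{2})$, which is of the required form $(r^{2}+r)g(r^{2})$.

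The one point demanding care---and the main thing to verify---is the degree bound: I must check that $\deg(g_{i}q_{n}) \le 4m+3$ for all admissible $i$ and $n$. Since $\deg g_{i} \le 3$ and $\deg q_{n} = 4n \le 4m$, we get $\deg(g_{i}q_{n}) \le 4m+3$ exactly, so every spanning element of $L$ lies in $L^{*}$, and by linearity $L \subseteq L^{*}$. This completes both parts. The proof is essentially bookkeeping: the conceptual input (independence of the $u_{i}$ over $Z/2[G]$, and their explicit shape) is already packaged in Lemma~\ref{lemma2.5}, so the only obstacle is keeping the degree arithmetic straight, which is routine given that $G^{2}$ raises degree by exactly $4$ per factor.
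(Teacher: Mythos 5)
Your proof is correct and follows the paper's own argument exactly: the dimension count comes from Lemma~\ref{lemma2.5}(a) via linear independence of the $u_{i}$ over $Z/2[G]$, and the containment from Lemma~\ref{lemma2.5}(b) together with the observation that $G^{2n}=(r^{8}+r^{6})^{n}$ is a polynomial of degree $4n$ in $r^{2}$. You have merely written out the bookkeeping that the paper leaves implicit, and the degree arithmetic $3+4n\le 4m+3$ checks out.
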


\begin{proof}
(a) of Lemma \ref{lemma2.5} gives the first result. Since $G^{2n} = (r^{8}+r^{6})^{n}$ and $n\le m$, (b) of Lemma \ref{lemma2.5} gives the second.\qed
\end{proof}

\begin{remark*}{Remark}
Let $M(\mathit{odd})$ consist of all elements of $Z/2[r]$ of the form $(r^{2}+r)g(r^{2})$, $g$ in $Z/2[t]$. Lemma~\ref{lemma1.5}~(b) shows that $U$ stabilizes $M(\mathit{odd})$. Moreover if the $A_{n}$ are as in Lemma \ref{lemma1.5}, then Lemma \ref{lemma1.6} and an induction show that $\mbox{degree}\ (A_{n})\le n$. It follows that $U$ stabilizes $L^{*}$. (But when $m\ge 1$, $U$ does not stabilize $L$. For $G^{3}=G^{2}u_{0}$ lies in $L$, but $U(G^{3})=F^{3}$ is not even a $Z/2[G]$-linear combination of $u_{0}$, $u_{1}$ and $u_{2}$).
\end{remark*}

\begin{lemma}
\label{lemma2.8} 
For $0\le i\le2$, $(U+I)^{2} = U^{2}+I$ maps $F^{i}G^{k}$ to $F^{i}T(F^{k})$.
\end{lemma}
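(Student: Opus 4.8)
The plan is to exploit the semi-linearity of $U$ together with the clean part of Lemma \ref{lemma2.1}. Since we work over $Z/2$, the identity $(U+I)^2 = U^2 + 2U + I = U^2 + I$ is immediate, so it suffices to compute $U^2(F^iG^k)$ for $0\le i\le 2$ and compare with $F^iT(F^k)$.

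The key computational step uses the defining semi-linearity $U(Gf)=FU(f)$, iterated to $U(G^jf)=F^jU(f)$. First I would write $U(F^iG^k)=U(G^kF^i)=F^kU(F^i)$. Now the crucial input is that Lemma \ref{lemma2.1} gives $U(F^i)=G^i$ \emph{precisely} for $i=0,1,2$; this is exactly where the hypothesis $0\le i\le 2$ enters. Hence $U(F^iG^k)=F^kG^i=G^iF^k$, and applying $U$ once more, again via $U(G^if)=F^iU(f)$, yields $U^2(F^iG^k)=F^iU(F^k)$.

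Finally I would assemble the result: $(U^2+I)(F^iG^k)=F^iU(F^k)+F^iG^k=F^i\bigl(U(F^k)+G^k\bigr)$. Since $\alpha(F^k)=G^k$, the bracket is exactly $T(F^k)=U(F^k)+\alpha(F^k)$ by Definition \ref{def2.3}, so the whole expression equals $F^iT(F^k)$, as desired.

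There is no real obstacle here beyond bookkeeping: the argument is a twofold application of semi-linearity bracketing a single use of the table in Lemma \ref{lemma2.1}. The one point deserving emphasis is why the statement is restricted to $i\le 2$. For $i=3$ one has $U(F^3)=G^3+F\ne G^3$, so the middle step would read $U(F^3G^k)=F^k(G^3+F)=G^3F^k+F^{k+1}$, picking up an extra $F^{k+1}$ term; the clean factorization by $F^i$ then fails and $U^2(F^3G^k)$ is no longer $F^3U(F^k)$. Thus $0\le i\le 2$ is precisely the range in which $U$ agrees with $\alpha$ on the powers $F^i$, which is what makes the factorization go through.
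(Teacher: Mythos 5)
Your proof is correct and follows essentially the same route as the paper: two applications of semi-linearity sandwiching the table from Lemma \ref{lemma2.1} (where $i\le 2$ is used), followed by rewriting $U(F^k)+G^k$ as $T(F^k)$ via Definition \ref{def2.3}. Your added remark about why $i=3$ fails is a nice clarification but not a departure from the paper's argument.
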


\begin{proof}
$U(F^{i}G^{k})=F^{k}U(F^{i})$; since $i\le 2$ this is $F^{k}G^{i}$. Then $U^{2}(F^{i}G^{k})=F^{i}(U(F^{k}))=F^{i}G^{k}+F^{i}T(F^{k})$, and $U^{2}+I$ takes $F^{i}G^{k}$ to $F^{i}T(F^{k})$.\qed
\end{proof}

\begin{theorem}%[]
\label{theorem2.9}
Let $K_{m}$ be the kernel of $U+I:L^{*}\rightarrow L^{*}$. (The remark shows that $U+I$ stabilizes $L^{*}$.) Then the dimension of $K_{m}$ is greater than or equal to $m+1$.
\end{theorem}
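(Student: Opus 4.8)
The plan is to locate all of $K_{m}$ already inside the subspace $L\subseteq L^{*}$, by showing that $U+I$ has a small image on $L$. Since $L\subseteq L^{*}$ (Lemma \ref{lemma2.7}), the kernel of the restriction $(U+I)|_{L}$ is contained in $K_{m}$; so it is enough to prove $\dim\ker\bigl((U+I)|_{L}\bigr)\ge m+1$. As $(U+I)|_{L}$ is a $Z/2$-linear map $L\to L^{*}$ and $\dim L=3m+3$, rank--nullity turns this into the claim $\dim (U+I)(L)\le 2m+2$.

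The key point is that $U$ acts on $L$ as the involution $\sigma$ of $Z/2[F,G]$ interchanging $F$ and $G$. Indeed each generator $u_{i}G^{2n}$ of $L$ is a $Z/2$-combination of monomials $F^{i}G^{k}$ with $i\le 2$, and for such a monomial semi-linearity together with Lemma \ref{lemma2.1} gives $U(F^{i}G^{k})=F^{k}U(F^{i})=F^{k}G^{i}=\sigma(F^{i}G^{k})$. Hence $(U+I)|_{L}=(\sigma+I)|_{L}$, so every element of $(U+I)(L)$ has the shape $\sigma(y)+y$ and is therefore $\sigma$-symmetric.

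Now I invoke the relation $F^{4}+G^{4}+FG=0$ of Definition \ref{def1.1}, i.e.\ $FG=(F+G)^{4}$. Since symmetric elements of $Z/2[F,G]$ lie in $Z/2[F+G,FG]$, this relation collapses that ring to the one-variable ring $Z/2[F+G]$, whence $(U+I)(L)\subseteq Z/2[F+G]$. On the other hand $U+I$ stabilizes $L^{*}$ (the Remark), so $(U+I)(L)\subseteq L^{*}$ as well, and it remains to bound $Z/2[F+G]\cap L^{*}$. Writing $F+G=r^{2}+r$, recall $L^{*}\subseteq M(\mathit{odd})$, and observe that $(r^{2}+r)^{k}$ lies in $M(\mathit{odd})=(r^{2}+r)Z/2[r^{2}]$ exactly when $k$ is odd: for even $k$ it is a square, hence lies in $Z/2[r^{2}]$, which meets $M(\mathit{odd})$ only in $0$. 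The degree bound defining $L^{*}$ then forces $k\le 4m+3$, so this intersection is spanned by the powers $(r^{2}+r)^{k}$ with $k$ odd and $1\le k\le 4m+3$, giving $\dim\bigl(Z/2[F+G]\cap L^{*}\bigr)=2m+2$. Thus $\dim (U+I)(L)\le 2m+2$, and $\dim K_{m}\ge\dim\ker\bigl((U+I)|_{L}\bigr)=(3m+3)-(2m+2)=m+1$.

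The main obstacle is the identification in the second and third paragraphs: recognizing that on $L$ the semi-linear operator $U$ is literally the coordinate swap $\sigma$, and that $FG=(F+G)^{4}$ reduces the symmetric elements to $Z/2[F+G]$. Granting these, the remainder---the parity argument isolating the odd powers of $F+G$ inside $L^{*}$, and the final dimension count---is routine bookkeeping.
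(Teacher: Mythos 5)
Your proof is correct, but it takes a genuinely different route from the paper's. The paper squares the operator: using Lemma \ref{lemma2.8} it computes that $(U+I)^{2}=U^{2}+I$ sends $F^{i}G^{k}$ to $F^{i}T(F^{k})$, then uses the degree-lowering property of $T$ (Theorem \ref{theorem2.4}~(b)) to bound the image of $L$ under $(U+I)^{2}$ by $m+1$, concluding that $\ker\bigl((U+I)^{2}|_{L}\bigr)$ has dimension $\ge 2m+2$ and finally halving to get $\dim K_{m}\ge m+1$. You instead work with $U+I$ unsquared, observing that on monomials $F^{i}G^{k}$ with $i\le 2$ the operator $U$ is the swap of $F$ and $G$, so that the image of $L$ consists of symmetric elements and hence lands in $Z/2[F+G]=Z/2[r^{2}+r]$; intersecting with $L^{*}$ gives the bound $2m+2$ on the image and $m+1$ on the kernel directly. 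Your identification of the swap is sound: it is the restriction of the automorphism $r\mapsto r+1$ of $Z/2[r]$, which interchanges $F=r(r+1)^{3}$ and $G=r^{3}(r+1)$, and whose fixed ring is exactly $Z/2[r^{2}+r]$ --- I would phrase the symmetry step this way rather than via the general assertion that invariants of $Z/2[F,G]$ lie in $Z/2[F+G,FG]$, since $Z/2[F,G]$ is a quotient of a polynomial ring and that assertion, as stated, is a fact about polynomial rings. Your argument has the advantage of locating an $(m+1)$-dimensional piece of $K_{m}$ inside $L$ itself and of avoiding the operator $T$ entirely; the paper's argument has the advantage that its intermediate output --- the bound $\dim\ker\bigl((U+I)^{2}|_{L}\bigr)\ge 2m+2$ --- is reused verbatim in the proof of Lemma \ref{lemma2.11}, and that $T$ is secretly the Hecke operator $T_{3}$, so the squaring argument is the one that generalizes along modular-forms lines.
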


\begin{proof}
Each $u_{i}G^{2n}$ with $n\le m$ is a sum of $F^{i}G^{k}$ where $i\le 2$ and $i+k$ is both $\le 2m+3$ and odd. By Lemma \ref{lemma2.8}, the image of such an element under $(U+I)^{2}$ is a sum of $F^{i}T(F^{k})$ with $i+k\le 2m+3$ and odd.  By Theorem \ref{theorem2.4}~(b) each such $F^{i}T(F^{k})$ is in the space spanned by the $F^{n}$ with $n\le 2m+1$ and odd. It follows that the image of $L$ under $(U+I)^{2}$ has dimension $\le m+1$, and that the kernel has dimension $\ge (3m+3)-(m+1)=2m+2$. Since $L\subset L^{*}$, $(U+I)^{2}:L^{*}\rightarrow L^{*}$ has a kernel of dimension at least $2m+2$, and the dimension of $K_{m}$ is at least $m+1$.\qed
\end{proof}

\begin{theorem}%[]
\label{theorem2.10}
$C_{4m}$ is a $Z/2$-linear combination of $C_{k}$ with $k<4m$.
\end{theorem}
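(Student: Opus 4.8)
The plan is to connect the abstract kernel-dimension estimate of Theorem \ref{theorem2.9} back to a concrete linear-dependence statement about the $C_n$. The key observation is Theorem \ref{theorem1.8}~(c): the operator $U+I$ sends $(r^2+r)r^{2n}$ to $(r^2+r)C_n(r^2)$. So if I identify $L^*$ with polynomials via $(r^2+r)g(r^2)\mapsto g$, the action of $U+I$ on $L^*$ becomes the $Z/2$-linear map $r^{2n}\mapsto C_n(r^2)$, i.e.\ (under the identification) the map sending the monomial basis $\{1,t,\dots,t^{4m+3}\}$ of degree-$\le 4m+3$ polynomials to $\{C_0,C_1,\dots,C_{4m+3}\}$. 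A kernel element of $U+I$ on $L^*$ is therefore precisely a nontrivial linear relation $\sum_{k\le 4m+3} \varepsilon_k C_k = 0$ with $\varepsilon_k\in Z/2$.

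First I would make this identification precise, checking that $L^*$ as defined in Definition \ref{def2.6} really has dimension $4m+4$ (the number of coefficients $g_0,\dots,g_{4m+3}$), so that $U+I$ acts as an endomorphism of a space whose basis is indexed by $n=0,\dots,4m+3$. By Theorem \ref{theorem2.9}, the kernel $K_m$ of $U+I$ on $L^*$ has dimension $\ge m+1\ge 1$. Hence there is at least one nonzero relation among $C_0,\dots,C_{4m+3}$. The next step is to extract from such a relation the specific conclusion that $C_{4m}$ (rather than some other index) is dependent on the $C_k$ with smaller index.

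This is where Lemma \ref{lemma1.9} does the crucial work. Among $C_0,C_1,\dots,C_{4m+3}$, take any nonzero relation $\sum \varepsilon_k C_k = 0$ supplied by $K_m$, and let $N$ be the largest index with $\varepsilon_N=1$. Then $C_N$ is a $Z/2$-linear combination of $C_k$ with $k<N$, so by Lemma \ref{lemma1.9} we must have $4\mid N$. Thus every relation ``peaks'' at a multiple of $4$, and the highest-index member of a relation is always dependent on lower ones. To pin down that we can reach the index $4m$ specifically, I would argue by the dimension count: since $\dim K_m\ge m+1$, and the relations can peak only at indices in $\{0,4,8,\dots,4m\}$ (there are exactly $m+1$ such multiples of $4$ in the range $0\le N\le 4m+3$), the relations must in fact account for each of these peaks, and in particular one relation peaks exactly at $N=4m$. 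That relation exhibits $C_{4m}$ as a combination of $C_k$ with $k<4m$.

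The main obstacle is the last counting step: deducing that $4m$ itself occurs as a peak, not merely that peaks lie among multiples of $4$. The clean way to see it is that the $m+1$ candidate peak-indices $0,4,\dots,4m$ force, by triangularity, a kernel of dimension at most $m+1$ whose elements are distinguished by their leading peak; combined with $\dim K_m\ge m+1$ from Theorem \ref{theorem2.9}, every peak-index including $4m$ must be realized. (Note $C_0=0$ by Theorem \ref{theorem1.8}~(b), so the peak $N=0$ is degenerate and one should track this carefully in the bookkeeping.) Once the peak $N=4m$ is produced, solving the relation for $C_{4m}$ completes the proof.
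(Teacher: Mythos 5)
Your proposal is correct and follows essentially the same route as the paper: both arguments translate kernel elements of $U+I$ on $L^{*}$ into $Z/2$-relations among the $C_{k}$ via Theorem \ref{theorem1.8}~(c), use Lemma \ref{lemma1.9} to force every leading index to be a multiple of $4$, and then combine the bound $\dim K_{m}\ge m+1$ from Theorem \ref{theorem2.9} with the count of the $m+1$ admissible leading indices $\{0,4,\dots,4m\}$ (equivalently, degrees in $r$) to conclude that $4m$ is realized. Your ``peak'' bookkeeping, including the degenerate relation $C_{0}=0$, matches the paper's choice of a basis of $K_{m}$ with distinct degrees.
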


\begin{proof}
Let $L$ and $L^{*}$ be as in Definition \ref{def2.6}, and $K_{m}$ be as in Theorem \ref{theorem2.9}. Suppose $f=(r^{2}+r)g(r^{2})$ is in $K_{m}$. Write $g$ as $t^{j}+$ (a sum of $t^{k}$ with $k<j$). Applying $U+I$ and using Theorem \ref{theorem1.8}~(c), we find that $C_{j}$ is the sum of the corresponding $C_{k}$. So by Lemma \ref{lemma1.9}, $4$ divides $j$. Since $j\le 4m+3$, the degree, $2j+2$, of $f$ in $r$ is an element of $[0, 8m+8]$ that is congruent to $2\bmod 8$. Note that there are exactly $m+1$ such elements. Now $K_{m}$ admits a $Z/2$-basis consisting of elements having distinct degrees in $r$. We've just shown that the only possible degrees are the $m+1$ integers in $\{2, 10, \cdots, 8m+2\}$. Since $K_{m}$ has dimension at least $m+1$, all of these degrees do occur, and there is an $f=(r^{2}+r)g(r^{2})$ in $K_{m}$ with $\mbox{degree}\ g=4m$. Write $g$ as $t^{4m}+$ a sum of $t^{k}$ with $k<4m$. Applying $U+I$ and using Theorem \ref{theorem1.8}~(c) once again we find that $C_{4m}$ is the sum of the corresponding $C_{k}$.\qed
\end{proof}

We conclude this section with:

\begin{lemma}
\label{lemma2.11} 
The kernels of $(U+I)^{2}$ acting on $L$ and on $L^{*}$ are the same.
\end{lemma}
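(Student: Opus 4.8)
The plan is to prove the two kernels coincide by a sandwich dimension count. Since $L\subseteq L^{*}$ (Lemma~\ref{lemma2.7}) and $(U+I)^{2}$ stabilizes $L^{*}$, the kernel $N_{L}$ of $(U+I)^{2}$ on $L$ is automatically a subspace of the kernel $N_{L^{*}}$ of $(U+I)^{2}$ on $L^{*}$. Hence it suffices to show these two subspaces have the same dimension, for then the inclusion forces equality. The proof of Theorem~\ref{theorem2.9} already yields $\dim N_{L}\ge 2m+2$, so the only thing left is the reverse estimate $\dim N_{L^{*}}\le 2m+2$; after that $2m+2\le\dim N_{L}\le\dim N_{L^{*}}\le 2m+2$ closes the argument.

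To bound $\dim N_{L^{*}}$ I would pass to the variable $t$. Every element of $L^{*}$ has the form $(r^{2}+r)g(r^{2})$ with $\deg g\le 4m+3$, and $(r^{2}+r)g(r^{2})\leftrightarrow g$ is a linear isomorphism onto the polynomials of degree $\le 4m+3$. By Theorem~\ref{theorem1.8}~(c), applying $U+I$ replaces $g=\sum_{n}a_{n}t^{n}$ by $h=\sum_{n}a_{n}C_{n}$; write $D(g)=h$ for this operation, so that $N_{L^{*}}$ corresponds to $\{\,g:\deg g\le 4m+3,\ D(D(g))=0\,\}$. The key input is the degree information from the proof of Lemma~\ref{lemma1.9}: $\deg C_{n}=n-1$ when $n\not\equiv 0\bmod 4$, whereas $\deg C_{n}<n-1$ when $n\equiv 0\bmod 4$. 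Consequently, if $\deg g=j$ with $j\not\equiv 0\bmod 4$, the term $C_{j}$ strictly dominates every lower $C_{k}$, so $D$ lowers the degree by exactly one and preserves the leading coefficient; while if $j\equiv 0\bmod 4$ then $\deg D(g)\le j-2$.

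Now I would track leading degrees through two applications of $D$, using an echelon basis of the kernel whose members have distinct degrees $j$ in $t$. The crucial observation is that if $j\equiv 2$ or $3\bmod 4$, then neither $j$ nor $j-1$ is divisible by $4$; hence each application of $D$ lowers the degree by exactly one and keeps a nonzero leading coefficient, so $\deg D(D(g))=j-2\ge 0$ and $D(D(g))\ne 0$. Therefore no kernel element can have leading degree $\equiv 2$ or $3\bmod 4$, and every admissible leading degree satisfies $j\equiv 0$ or $1\bmod 4$. In the range $0\le j\le 4m+3$ there are exactly $m+1$ degrees of each admissible type, so the kernel has at most $2m+2$ distinct leading degrees, whence $\dim N_{L^{*}}\le 2m+2$, as required.

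The main obstacle is the degree bookkeeping in the third step: one must verify that the leading coefficient genuinely survives each of the two applications of $D$ when $j\equiv 2$ or $3\bmod 4$. This rests on the exact statement $\deg C_{n}=n-1$ for $n\not\equiv 0\bmod 4$ together with the fact that the lower-order $C_{k}$ appearing in $D(g)$ cannot raise the degree. Once this exactness is secured, the residue count modulo $4$ is immediate, and the sandwich with the lower bound from Theorem~\ref{theorem2.9} completes the proof.
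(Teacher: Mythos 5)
Your proposal is correct, and the overall skeleton (the sandwich $2m+2\le\dim N_{L}\le\dim N_{L^{*}}\le 2m+2$, with the lower bound taken from the proof of Theorem~\ref{theorem2.9}) matches the paper. Where you genuinely diverge is in the upper bound for $\dim N_{L^{*}}$. The paper gets it in one line by citing the $(m+1)$-element basis of $K_{m}=\ker(U+I)|_{L^{*}}$ constructed in the proof of Theorem~\ref{theorem2.10} and (implicitly) the general fact that $\dim\ker A^{2}\le 2\dim\ker A$ for a linear operator $A$. You instead run a direct leading-degree analysis of $(U+I)^{2}$ in the $t$-variable: since $\deg C_{n}=n-1$ exactly when $n\not\equiv 0\bmod 4$ and $\deg C_{n}\le n-1$ always, the map $D(g)=\sum a_{n}C_{n}$ drops the degree by exactly one with surviving leading term whenever $\deg g\not\equiv 0\bmod 4$; applying this twice shows no element of $\ker D^{2}$ can have leading degree $\equiv 2$ or $3\bmod 4$, and the echelon-basis count of admissible degrees in $[0,4m+3]$ gives $\le 2m+2$. (The only point worth making explicit is the edge case $j=2,3$, where you need $j-2\ge 0$ so that $t^{j-2}$ really witnesses $D(D(g))\ne 0$; this holds.) Your route is a ``squared'' version of the degree argument inside Theorem~\ref{theorem2.10} itself: it buys independence from the conclusion of that theorem and avoids invoking the kernel-of-a-square inequality, at the cost of being longer than the paper's two-sentence appeal to already-established facts.
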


\begin{proof}
The proof of Theorem \ref{theorem2.9} shows that the first kernel has dimension at least $2m+2$. So it's enough to show that the second kernel has dimension $\le 2m+2$. But in the proof of Theorem~\ref{theorem2.10} we constructed a basis of $K_{m}$ having $m+1$ elements.\qed
\end{proof}

\section{The structure of  $\bm K$}
\label{section3}
In the last section we constructed a sequence of subspaces $K_{0}\subset K_{1}\subset K_{2}\subset \ldots$ of $Z/2[r]$ and, in the proof of Theorem \ref{theorem2.10}, showed that $K_{m}$ has dimension $m+1$.

\begin{definition}
\label{def3.1}
$K$ is the union of the $K_{m}$. Alternatively, $K$ is the kernel of $U+I:M(\mathit{odd})\rightarrow M(\mathit{odd})$, where $M(\mathit{odd})$ consists of the $(r^{2}+r)g(r^{2})$, $g$ in $Z/2[t]$.
\end{definition}

In this section we construct an isomorphism between $K$ and a certain subquotient, $K1/N1$, of $Z/2[r]$.

\begin{definition}
\label{def3.2}
$N2$, $K1$, $K5$, and $N1$ are the (free) $Z/2[G^{2}]$-submodules of $Z/2[r]$ with the following bases.

\begin{tabular}{ll}
N2: & A basis is $\{G,F,F^{2}G\}$.\\[-1ex]
K1 and K5: & Bases are $\{G,F\}$ and $\{G,F^{2}G\}$.\\[-1ex]
N1: & A basis is $\{G\}$.
\end{tabular}

Note that $N2/N1$ is the direct sum of $K1/N1$ and $K5/N1$. This gives us a projection map, $\pr 1:N2/N1\rightarrow K1/N1$.
\end{definition}

\begin{lemma}
\label{lemma3.3} 
$K\subset N2$.
\end{lemma}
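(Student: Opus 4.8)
The plan is to sandwich $K$ between the smaller space $L$ and the module $N2$: I will establish the two inclusions $K\subseteq L$ and $L\subseteq N2$, and then conclude, since by Definition~\ref{def3.1} $K$ is the union of the $K_{m}$, each of which is contained in the corresponding $L$.

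First I would dispose of the easy inclusion $L\subseteq N2$ by checking it on generators. The space $L$ is spanned by the elements $u_{i}G^{2n}$ with $0\le n\le m$, and $G^{2n}=(G^{2})^{n}$ is a scalar in $Z/2[G^{2}]$; so it is enough to see that each $u_{i}$ lies in $N2$. Now $u_{0}=G$ and $u_{1}=F$ are basis vectors of $N2$, while $u_{2}=(F+G)^{2}G=F^{2}G+G^{3}=F^{2}G+G^{2}\cdot G$ is visibly a $Z/2[G^{2}]$-combination of the basis vectors $F^{2}G$ and $G$. Hence every generator of $L$ lies in $N2$, and $L\subseteq N2$.

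The substantive inclusion is $K\subseteq L$, and here the subtlety is that $K$ is defined as a kernel inside the \emph{larger} space $L^{*}$, which is not contained in $N2$: already for $m=0$ the element $(r^{2}+r)r^{4}=r^{6}+r^{5}$ lies in $L^{*}$ but not in $N2$, since its degree is $6$ whereas the degree-$\le 7$ part of $N2$ is merely the $Z/2$-span of $G$ and $F$. So I cannot simply invoke $K\subseteq L^{*}$. Instead, fix $m$ and recall that $K_{m}=\ker(U+I:L^{*}\to L^{*})$. Any $f\in K_{m}$ satisfies $(U+I)f=0$, hence $(U+I)^{2}f=0$, so $K_{m}$ is contained in the kernel of $(U+I)^{2}$ acting on $L^{*}$. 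By Lemma~\ref{lemma2.11} this kernel coincides with the kernel of $(U+I)^{2}$ acting on $L$, which is a subspace of $L$. Therefore $K_{m}\subseteq L\subseteq N2$ for every $m$, and taking the union gives $K=\bigcup_{m}K_{m}\subseteq N2$.

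The main obstacle is exactly this gap between $L$ and $L^{*}$: the natural presentation of $K$ places it in $L^{*}$, which overshoots $N2$, so the transparent membership $L\subseteq N2$ is not directly applicable. Lemma~\ref{lemma2.11}, identifying the $(U+I)^{2}$-kernels on the two spaces, is precisely the tool that pulls $K$ back into the smaller space $L$, where containment in $N2$ is immediate from the generators. Everything else is routine bookkeeping.
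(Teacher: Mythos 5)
Your proof is correct and follows essentially the same route as the paper: the paper likewise passes from $f\in K_{m}\subseteq L^{*}$ to $f\in L$ via Lemma~\ref{lemma2.11}, and then observes that $f$ is a $Z/2[G^{2}]$-combination of $u_{0}=G$, $u_{1}=F$ and $u_{2}=F^{2}G+G^{3}$, all of which lie in $N2$. Your write-up simply makes explicit the two steps (the $(U+I)^2$-kernel argument and the generator check $L\subseteq N2$) that the paper compresses into one sentence each.
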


\begin{proof}
Suppose $f$ is in $K$. Then $f$ is in some $K_{m}$, $f$ is in an $L^{*}$, and by Lemma \ref{lemma2.11}, $f$ is in an $L$. So $f$ is a $Z/2[G^{2}]$-linear combination of $u_{0}=G$, $u_{1}=F$ and $u_{2}=F^{2}G+G^{3}$.\qed
\end{proof}

Composing the inclusion of $K$ in $N2$ with $\pr 1$, we get a map $K\rightarrow K1/N1$ which we also call $\pr 1$. Our goal is to show that this $\pr 1$ is bijective.

\begin{lemma}
\label{lemma3.4} 
Suppose $f\ne 0$ is in $K$. Write $f$ as $(r^{2}+r)g(r^{2})$ with degree $g$ equal to $4m$. Then $\pr 1\,(f) = (G^{2m}+ \mbox{ a sum of }G^{2k}, k<m)\cdot F$.

\end{lemma}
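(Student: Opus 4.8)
The plan is to make $f$ explicit in the $Z/2[G^{2}]$-basis $\{u_{0},u_{1},u_{2}\}$ of $L$ and then read off the coefficient of $F$. First I would pin down which $L$ contains $f$. Since $f\ne 0$ lies in $K$ and $f=(r^{2}+r)g(r^{2})$ with $\mbox{degree}\ g=4m$, the element $f$ lies in the space $L^{*}$ of Definition~\ref{def2.6} for this value of $m$; as $f\in K$ it is killed by $U+I$, hence by $(U+I)^{2}$, so $f$ lies in the kernel of $(U+I)^{2}$ on $L^{*}$. By Lemma~\ref{lemma2.11} this kernel coincides with the kernel of $(U+I)^{2}$ on $L$, and in particular $f\in L$. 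Thus
\[
f=\sum_{n=0}^{m}\bigl(\alpha_{n}u_{0}+\beta_{n}u_{1}+\gamma_{n}u_{2}\bigr)G^{2n},\qquad \alpha_{n},\beta_{n},\gamma_{n}\in Z/2.
\]
Using $u_{0}=G$, $u_{1}=F$ and $u_{2}=F^{2}G+G^{3}$ from Lemma~\ref{lemma3.3}, the expansion of $f$ in the $N2$-basis $\{G,F,F^{2}G\}$ has $F$-coefficient exactly $\sum_{n}\beta_{n}G^{2n}$. Since $\pr 1$ annihilates the $G$-component (it lies in $N1$) and the $F^{2}G$-component (the $K5/N1$ summand), I obtain $\pr 1\,(f)=\bigl(\sum_{n}\beta_{n}G^{2n}\bigr)F$.

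Everything now reduces to showing that the top coefficient $\beta_{m}$ equals $1$, and for this I would pass to polynomials in $r^{2}$. By Lemma~\ref{lemma2.5}(b) the generators $u_{0},u_{1},u_{2}$ are $(r^{2}+r)$ times the polynomials $t$, $t+1$, $t^{3}+t^{2}$ evaluated at $t=r^{2}$, while $G^{2n}=(r^{8}+r^{6})^{n}$ is $(t^{4}+t^{3})^{n}$ at $t=r^{2}$. Cancelling $r^{2}+r$ from $f=(r^{2}+r)g(r^{2})$ gives the polynomial identity
\[
g=\sum_{n=0}^{m}\bigl(\alpha_{n}t+\beta_{n}(t+1)+\gamma_{n}(t^{3}+t^{2})\bigr)(t^{4}+t^{3})^{n}
\]
in $Z/2[t]$. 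I would compare top-degree coefficients. Since $(t^{4}+t^{3})^{n}$ has degree $4n$ and the three generating polynomials have degrees $1$, $1$, $3$, only the block $n=m$ can reach degree beyond $4m-1$. The coefficient of $t^{4m+3}$ comes solely from $\gamma_{m}(t^{3}+t^{2})(t^{4}+t^{3})^{m}$ and equals $\gamma_{m}$, so $\mbox{degree}\ g=4m$ forces $\gamma_{m}=0$. Granting that, the coefficient of $t^{4m+1}$ equals $\alpha_{m}+\beta_{m}$, which must likewise vanish, giving $\alpha_{m}=\beta_{m}$.

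The conclusion is then a short contradiction argument, which I expect to be the one genuinely load-bearing step. If $\beta_{m}=0$ we would have $\alpha_{m}=\beta_{m}=0$ together with $\gamma_{m}=0$, so the whole $n=m$ block of the identity would vanish and $g$ would be a sum over $n\le m-1$, of degree at most $4(m-1)+3=4m-1$; this contradicts $\mbox{degree}\ g=4m$. Hence $\beta_{m}=1$, and
\[
\pr 1\,(f)=\Bigl(G^{2m}+\sum_{k<m}\beta_{k}G^{2k}\Bigr)F,
\]
exactly as asserted. The only place demanding care is the degree bookkeeping in the displayed identity for $g$ (in particular the order in which $\gamma_{m}=0$ and then $\alpha_{m}=\beta_{m}$ are extracted); the remaining manipulations are linear algebra in the fixed basis $\{u_{0},u_{1},u_{2}\}$ together with the already-computed action of $\pr 1$.
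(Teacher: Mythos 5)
Your proof is correct and follows essentially the same route as the paper's: place $f$ in $L$ via Lemma \ref{lemma2.11}, expand in the $Z/2[G^{2}]$-basis, note that $\pr 1$ kills the $u_{0}$- and $u_{2}$-components, and identify the top coefficient by degree counting. The only cosmetic difference is that the paper rewrites the generators as $F+G$, $G$, $(F+G)^{2}G$ so that their $G^{2n}$-multiples have pairwise distinct degrees in $r$ and the leading coefficient of $f$ (which has degree $8m+2$) is read off at once, whereas you extract $\gamma_{m}=0$, then $\alpha_{m}=\beta_{m}$, then $\beta_{m}=1$ in three steps in $Z/2[t]$; both versions are sound.
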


\begin{proof}
$G^{2n}(F+G)$, $G^{2n}(G)$ and $G^{2n+1}(F+G)^{2}$ have degrees $8n+2$, $8n+4$ and $8n+8$ in $r$. Lemma \ref{lemma3.3} shows that $f$, which has degree $8m+2$, is a sum of $G^{2m}(F+G)$, various $G^{2k}(F+G)$ with $k<m$, various $G^{2n}(G)$ and various $G^{2n}(F^{2}G+G^{3})$. Applying $\pr 1$ which annihilates each $G^{2n}(G)$ and each $G^{2n}(F^{2}G)$, we get the result.

It's now easy to see that $\pr 1 : K\rightarrow K1/N1$ is bijective. Recall that for each $m\ge 0$ there is an element $(r^{2}+r)g(r^{2})$ of $K$ with degree $g=4m$, and that furthermore if we choose one such element for each $m\ge 0$ we get a  $Z/2$-basis of $K$. Since the $G^{2m}F$, $m\ge 0$, form a $Z/2$-basis of $K1/N1$, Lemma \ref{lemma3.4} gives the result.\qed
\end{proof}

\section{The action of  $\bm T_{p}$ and the main theorem}
\label{section4}

Up to now, everything we've done is algebra in a polynomial ring $Z/2[r]$. We now connect this with modular forms, using language from \cite{2}. Instead of $r$ being an indeterminate it is now the explicit element $\sum_{n>0}(x^{n^{2}}+x^{2n^{2}}+x^{3n^{2}}+x^{6n^{2}})$ of $Z/2[[x]]$. Then $Z/2[r]$ is a subspace of $Z/2[[x]]$ and \cite{2}, Corollary 2.5, shows that this subspace is the space, $M$, of mod $2$ modular forms of level $\Gamma_{0}(3)$. (See \cite{2}, Definitions 2.2 and 2.3.)

\begin{definition}
\label{def4.1}
$U_{2}$, $U_{3}$ and $T_{p}$ ($p$ a prime $>3$) are the following commuting operators on $Z/2[[x]]$.
\vspace{-2ex}
\begin{eqnarray*}
U_{2}(\sum a_{n}x^{n}) &=& \sum a_{2n}x^{n}.\\
U_{3}(\sum a_{n}x^{n}) &=& \sum a_{3n}x^{n}.\\
T_{p}(\sum a_{n}x^{n}) &=& \sum a_{pn}x^{n} + \sum a_{n}x^{pn}.
\end{eqnarray*}
\end{definition}

\begin{lemma}
\label{lemma4.2} 
The above operators stabilize $Z/2[r]$.
\end{lemma}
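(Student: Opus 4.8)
The plan is to read the three families of operators as the Hecke operators on the space $M$ of mod $2$ modular forms of level $\Gamma_0(3)$, which by \cite{2}, Corollary 2.5, is exactly the subspace $Z/2[r]$ of $Z/2[[x]]$. Two of the three families can be handled by hand, directly from the $q$-expansion definitions of Definition~\ref{def4.1}, and only $T_p$ will require genuine input from the theory of modular forms.

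First I would record that the squaring map $f\mapsto f^{2}$ carries $Z/2[r]$ into itself, since $(r^{n})^{2}=r^{2n}$, and that $Z/2[r]$ is free of rank $2$ over its subring of squares $Z/2[r^{2}]$, with basis $\{1,r\}$; concretely every $f$ in $Z/2[r]$ is $P^{2}+rQ^{2}$ for unique $P,Q$ in $Z/2[r]$. For $U_{2}$ a short $q$-expansion computation shows that $U_{2}$ is semi-linear over squaring, i.e. $U_{2}(g^{2}f)=g\,U_{2}(f)$, so that $U_{2}$ is determined by $U_{2}(1)$ and $U_{2}(r)$. The first is $1$, and a direct evaluation on the theta-like series $r=\sum_{n>0}(x^{n^{2}}+x^{2n^{2}}+x^{3n^{2}}+x^{6n^{2}})$ shows that $U_{2}(r)=r$: the even-exponent monomials, after halving the exponents, reassemble exactly into $r$. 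Hence $U_{2}(P^{2}+rQ^{2})=P+rQ$ lies in $Z/2[r]$, and $U_{2}$ stabilizes $Z/2[r]$.

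For $U_{3}$ I would appeal to the construction of Sections~\ref{section1}--\ref{section3}. The operator $U$ of Definition~\ref{def1.3} was built from the outset as a map $Z/2[r]\rightarrow Z/2[r]$, so it visibly stabilizes $Z/2[r]$; the only thing left to check is that this abstract $U$ really is the $q$-expansion operator $U_{3}$, which is the content of the identification from \cite{2} asserted in the Remark of Section~\ref{section1}. Concretely one verifies that the power-series $U_{3}$ satisfies $U_{3}(g(x^{3})f)=g(x)U_{3}(f)$, that $G=F(x^{3})$ gives $U_{3}(G)=F$ and hence the semi-linearity $U_{3}(Gf)=F\,U_{3}(f)$, and that $U_{3}$ agrees with Definition~\ref{def1.3} on the $Z/2[G]$-basis $\{1,r,r^{2},r^{3}\}$ of $Z/2[r]$, using the $q$-expansions of $F$, $G$ and $r$ supplied by \cite{2}.

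The main obstacle is $T_{p}$ for $p>3$, where no purely formal reason is visible on $q$-expansions: $T_{p}=U_{p}+V_{p}$ with $V_{p}(f)=f(x^{p})$, and neither summand alone preserves $Z/2[r]$. Here I would use the geometric meaning supplied by \cite{2}: $M=Z/2[r]$ is precisely the reduction mod $2$ of the $q$-expansions of holomorphic modular forms of level $\Gamma_0(3)$ (over all weights), and the operator of Definition~\ref{def4.1} is the reduction of the classical Hecke operator $T_{p}$. Since $p$ is prime to both the level $3$ and the characteristic $2$, the classical $T_{p}$ preserves weight and level $\Gamma_0(3)$, so reducing mod $2$ shows that $T_{p}$ maps $M$ into $M$. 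The delicate point is thus not any dimension count but the bridge between the formally defined operators on $Z/2[[x]]$ and the genuine Hecke operators on modular forms, which is exactly what \cite{2} is invoked to provide.
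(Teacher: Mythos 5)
Your proposal is correct, and for the only genuinely delicate case, $T_{p}$ with $p>3$, it is the same argument as the paper's: Lemma \ref{lemma4.2} is proved there in one sentence, ``immediate from the modular forms interpretation of $Z/2[r]$,'' i.e.\ by identifying $Z/2[r]$ with the space $M$ of mod $2$ modular forms of level $\Gamma_{0}(3)$ via \cite{2} and invoking the fact that the classical Hecke operators preserve that space. Where you differ is in supplying self-contained $q$-expansion proofs for $U_{2}$ and $U_{3}$: the semi-linearity $U_{2}(g^{2}f)=gU_{2}(f)$ together with $U_{2}(1)=1$, $U_{2}(r)=r$ and the freeness of $Z/2[r]$ over $Z/2[r^{2}]$ on $\{1,r\}$ does handle $U_{2}$ (your reassembly of the even-exponent monomials of $r$ checks out, as the exponents $2n^{2}$, $6n^{2}$, $4m^{2}$, $12m^{2}$ never collide), and your $U_{3}$ argument is in substance the paper's later Lemma \ref{lemma4.4}, pulled forward. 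These elementary verifications buy independence from \cite{2} for two of the three families, at the cost of some computation the paper avoids; note only that checking $U_{3}$ against Definition \ref{def1.3} on $r^{3}$ directly from $q$-expansions is slightly laborious, and the paper sidesteps it by using the basis $\{1,r,r^{2},r^{4}\}$, on which both maps act as the identity. For $T_{p}$ neither you nor the paper can avoid the bridge to genuine modular forms, and you identify that correctly as the essential input.
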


This is immediate from the modular forms interpretation of $Z/2[r]$.

\begin{lemma}
\label{lemma4.3} \hspace{1em}\\%to wrap line
\vspace{-5ex}
\begin{eqnarray*}
F&=&\textstyle \sum_{n\ \mathrm{odd},\ n>0} x^{n^{2}}.\\
G&=&F(x^{3}).
\end{eqnarray*}
\end{lemma}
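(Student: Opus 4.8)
The plan is to reduce both displayed identities to a single one---the classical mod~$2$ expansion of $\Delta$---and to obtain the passage to $G=F(x^{3})$ by a formal manipulation that requires no modular input. First I would fix notation for the two theta series in play: set $a=\sum_{n>0}x^{n^{2}}$ and $b=\sum_{n>0}x^{3n^{2}}$ in $Z/2[[x]]$. Since squaring a power series over $Z/2$ is the substitution $x\mapsto x^{2}$ (Frobenius), we get $a^{2}=\sum_{n>0}x^{2n^{2}}$, $b^{2}=\sum_{n>0}x^{6n^{2}}$, and $a^{4}=\sum_{n>0}x^{4n^{2}}=\sum_{m\ \mathrm{even},\ m>0}x^{m^{2}}$; hence $a+a^{4}=\sum_{n\ \mathrm{odd},\ n>0}x^{n^{2}}$. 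From the explicit formula for $r$ we read off $r=a+a^{2}+b+b^{2}$, while $b=a(x^{3})$ directly from the definitions.

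Next I would show that the two claims collapse to the single identity $F=a+a^{4}$. By Definition~\ref{def1.1}, $F+G=r(r+1)=r+r^{2}$; substituting $r=a+a^{2}+b+b^{2}$ and $r^{2}=a^{2}+a^{4}+b^{2}+b^{4}$, the terms $a^{2}$ and $b^{2}$ cancel, giving $F+G=a+a^{4}+b+b^{4}$. This is a purely formal consequence of the shape of $r$ and needs no relation between $a$ and $b$. Since $b+b^{4}=a(x^{3})+a(x^{3})^{4}=(a+a^{4})(x^{3})$, the identity $F=a+a^{4}$ forces $G=(F+G)+F=b+b^{4}=F(x^{3})$, which is the second displayed formula. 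Thus it suffices to prove $F=\sum_{n\ \mathrm{odd},\ n>0}x^{n^{2}}$.

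The hard part is this last identity, and it is precisely where genuine modular (theta-function) input enters: the series $a$ and $b$ are not algebraically independent, so $F=a+a^{4}$ cannot be obtained merely by expanding $r(r+1)^{3}$ formally in $a$ and $b$. I would prove it by invoking the identification, from \cite{2}, of $F=r(r+1)^{3}\in Z/2[r]=M$ with the mod~$2$ reduction of $\Delta$ in the variable $x$, together with the classical congruence $\Delta\equiv\sum_{k\ge 0}q^{(2k+1)^{2}}\mod{2}$. The latter follows from $\Delta=\eta^{24}$ and Jacobi's identity $\prod_{n\ge1}(1-q^{n})^{3}=\sum_{k\ge0}(-1)^{k}(2k+1)q^{k(k+1)/2}$: reducing modulo~$2$ gives $\prod(1-q^{n})^{3}\equiv\sum_{k\ge0}q^{k(k+1)/2}$, and raising to the eighth power (Frobenius) and multiplying by $q$ yields $\Delta\equiv\sum_{k\ge0}q^{1+4k(k+1)}=\sum_{k\ge0}q^{(2k+1)^{2}}$. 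Matching $q$ with $x$ gives $F=\sum_{n\ \mathrm{odd},\ n>0}x^{n^{2}}$, and then the second paragraph delivers $G=F(x^{3})$ for free. The only genuine obstacle is the mod~$2$ expansion of $\Delta$ (equivalently, pinning down which of $a+a^{4}$, $b+b^{4}$ is $F$), since everything else is formal char~$2$ algebra.
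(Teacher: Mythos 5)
Your proposal is correct, but it is organized quite differently from the paper's proof, which consists entirely of a citation: ``See \cite{2}, Theorem 2.8, and the first sentence of \cite{2}, section 2,'' i.e.\ both displayed identities are simply imported from the earlier paper. What you do instead is (i) observe that with $a=\sum x^{n^{2}}$, $b=a(x^{3})$ one has $r=a+a^{2}+b+b^{2}$, hence $F+G=r+r^{2}=(a+a^{4})+(b+b^{4})$ purely formally in characteristic $2$, so that the identity $G=F(x^{3})$ follows for free once $F=a+a^{4}=\sum_{n\ \mathrm{odd}}x^{n^{2}}$ is known; and (ii) establish that last identity from the identification $F\equiv\Delta\pmod 2$ together with the classical Jacobi computation $\Delta\equiv\sum q^{(2k+1)^{2}}\pmod 2$. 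Both steps check out: the Frobenius manipulations in (i) are routine and correct, and the derivation in (ii) from $\prod(1-q^{n})^{3}\equiv\sum q^{k(k+1)/2}$ is the standard one. The comparison, then: the paper's proof is a pure citation and hence shorter; yours isolates exactly where genuine modular/theta input is required (a single identity, namely that the abstract polynomial $r(r+1)^{3}$ equals $\Delta$ mod $2$ as a power series in $x$, which you still must take from \cite{2} or from the remark in Section \ref{section1}) and shows that the second displayed formula is a formal consequence requiring no further input. You are right to flag that $F=a+a^{4}$ cannot be obtained by formal algebra alone, since $a$ and $b$ satisfy a nontrivial relation; being explicit about that is the main thing your write-up adds over the paper's one-line reference.
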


\begin{proof}
See \cite{2}, Theorem 2.8, and the first sentence of \cite{2}, section 2.\qed
\end{proof}

\begin{lemma}
\label{lemma4.4} 
$U_{3} : Z/2[r]\rightarrow Z/2[r]$ is the map $U$.
\end{lemma}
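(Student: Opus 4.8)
The plan is to show that $U_{3}$ satisfies the two defining properties of $U$ in Definition \ref{def1.3}: that it is semi-linear in the sense of Definition \ref{def1.2}, and that it agrees with $U$ on the basis $1$, $r$, $r^{2}$, $r^{3}$ of $Z/2[r]$ over $Z/2[G]$. Since a semi-linear map is determined by its values on these four elements, this suffices.

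First I would record the elementary commutation identity $U_{3}(h(x^{3})\,f)=h(x)\,U_{3}(f)$, valid for all $h$, $f$ in $Z/2[[x]]$; this is a one-line coefficient check, since for $f=\sum a_{n}x^{n}$ and $h=\sum b_{m}x^{m}$ the coefficient of $x^{N}$ on each side is $\sum_{m}b_{m}a_{3(N-m)}$. Because $G=F(x^{3})$ by Lemma \ref{lemma4.3}, taking $h=F$ gives $U_{3}(Gf)=F\,U_{3}(f)$, so $U_{3}$ is semi-linear; in particular $U_{3}(G)=F$ follows from $U_{3}(1)=1$. The same bookkeeping gives the Frobenius identity $U_{3}(f^{2})=(U_{3}(f))^{2}$ (only even-exponent terms of $f^{2}$ survive, and $3m$ is even exactly when $m$ is), the analogue of Lemma \ref{lemma1.4}. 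With $U_{3}(1)=1=U(1)$ immediate, it remains only to evaluate $U_{3}$ on $r$, $r^{2}$, $r^{3}$.

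For the base case $U_{3}(r)=r$ I would decompose $r$ into theta pieces. Writing $a=\sum_{n>0}x^{n^{2}}$ and $c=a(x^{3})=\sum_{n>0}x^{3n^{2}}$, squaring in characteristic $2$ gives $a^{2}=\sum x^{2n^{2}}$ and $c^{2}=\sum x^{6n^{2}}$, so $r=a+a^{2}+c+c^{2}$ directly from its defining expansion. A direct coefficient computation then shows $U_{3}(a)=c$ (as $3m$ is a square iff $m=3j^{2}$) and $U_{3}(c)=a$ (as $3m=3n^{2}$ iff $m=n^{2}$), whence the Frobenius identity yields $U_{3}(a^{2})=c^{2}$ and $U_{3}(c^{2})=a^{2}$. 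Summing, $U_{3}(r)=c+c^{2}+a+a^{2}=r$, and therefore $U_{3}(r^{2})=(U_{3}(r))^{2}=r^{2}$.

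The last value $U_{3}(r^{3})$ I would obtain by a bootstrap rather than a further theta computation, exploiting the identity $r^{4}=r^{3}+G$ coming from $G=r^{3}(r+1)$ in Definition \ref{def1.1}. Applying $U_{3}$ and using $U_{3}(r^{4})=(U_{3}(r^{2}))^{2}=r^{4}$ together with $U_{3}(G)=F$ gives $U_{3}(r^{3})=r^{4}+F=r^{3}+r^{2}+r$, since $F=r(r+1)^{3}=r^{4}+r^{3}+r^{2}+r$; this is exactly $U(r^{3})$, so $U_{3}=U$. The main obstacle is really just the base case $U_{3}(r)=r$: every other value is forced by semi-linearity, the Frobenius relation, and the single identity $r^{4}=r^{3}+G$, so the whole argument hinges on organizing the $q$-expansion of $r$ into the pieces $a$, $a^{2}$, $c$, $c^{2}$ on which $U_{3}$ acts transparently.
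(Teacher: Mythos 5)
Your proof is correct and follows the paper's own route: establish semi-linearity of $U_{3}$ from $G=F(x^{3})$ (Lemma \ref{lemma4.3}) and then check agreement with $U$ on a basis of $Z/2[r]$ over $Z/2[G]$. The paper simply asserts that $U_{3}$ and $U$ each fix $1$, $r$, $r^{2}$ and $r^{4}$ (an alternative basis, since $r^{4}=r^{3}+G$), whereas you supply the omitted verification by splitting $r$ into the theta pieces $a$, $a^{2}$, $c$, $c^{2}$ and then recovering $U_{3}(r^{3})$ from $r^{4}=r^{3}+G$ --- the same bookkeeping, just written out in full.
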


\begin{proof}
Lemma \ref{lemma4.3} shows that $U_{3}(Gf) = FU_{3}(f)$. So $U_{3}$, like $U$, is semi-linear, and it's enough to show that they agree on a basis of $Z/2[r]$ over $Z/2[G]$. But they each fix $1$, $r$, $r^{2}$ and $r^{4}$.\qed
\end{proof}

\begin{lemma}
\label{lemma4.5} 
The $M(\mathit{odd})$ of Definition \ref{def3.1} consists of those elements of $Z/2[r]$ that are odd power series in $x$. In other words, it is the kernel of $U_{2}: Z/2[r]\rightarrow Z/2[r]$.
\end{lemma}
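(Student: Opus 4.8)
The plan is to compute $U_{2}$ explicitly on $Z/2[r]$ using the decomposition of $Z/2[r]$ as a free rank-two module over $Z/2[r^{2}]$, and then read off its kernel. I would organize the argument around two ingredients: a general fact about $U_{2}$ in characteristic $2$, and a single explicit computation with the power series $r$.

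First I would record the standard behaviour of $U_{2}$ with respect to squaring: for any $g,f$ in $Z/2[[x]]$ one has $U_{2}(g\cdot f^{2})=U_{2}(g)\cdot f$, a direct coefficient computation since $f^{2}=\sum a_{n}x^{2n}$ lives on arithmetic progressions of step $2$. In particular $U_{2}(f^{2})=f$, and $U_{2}(\sum a_{n}x^{n})=\sum a_{2n}x^{n}$ vanishes exactly when every even-index coefficient vanishes, i.e. exactly when $\sum a_{n}x^{n}$ is an odd power series. This already yields the second assertion of the lemma, $\ker U_{2}=\{\text{odd power series}\}$, and reduces everything to identifying $M(\mathit{odd})$ with $\ker(U_{2}:Z/2[r]\to Z/2[r])$ (which stabilizes $Z/2[r]$ by Lemma~\ref{lemma4.2}).

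Second I would compute $U_{2}(r)=r$ directly from $r=\sum_{n>0}(x^{n^{2}}+x^{2n^{2}}+x^{3n^{2}}+x^{6n^{2}})$. The four exponent-sequences $\{n^{2}\}$, $\{2n^{2}\}$, $\{3n^{2}\}$, $\{6n^{2}\}$ are pairwise disjoint, so $r$ has coefficient $1$ at each such exponent with no cancellation; its even-exponent part is supported on $\{2n^{2}\}$, $\{6n^{2}\}$, $\{(2k)^{2}\}$, $\{3(2k)^{2}\}$, and halving these exponents returns precisely the defining support of $r$. Hence $U_{2}(r)=r$. Now write an arbitrary $h$ in $Z/2[r]$ as $h=a(r^{2})+r\,b(r^{2})=a(r)^{2}+r\,b(r)^{2}$ with $a,b$ in $Z/2[t]$, using the basis $\{1,r\}$ of $Z/2[r]$ over $Z/2[r^{2}]$. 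The square-pullout identity together with $U_{2}(r)=r$ gives $U_{2}(h)=a(r)+r\,b(r)$. Since $r$ is transcendental over $Z/2$ (as the identification of $Z/2[r]$ with a polynomial ring already presumes), $U_{2}(h)=0$ iff $a(t)+t\,b(t)=0$, i.e. $a=t\,b$; then $h=r^{2}b(r^{2})+r\,b(r^{2})=(r^{2}+r)\,b(r^{2})$ lies in $M(\mathit{odd})$. Conversely each $(r^{2}+r)\,g(r^{2})$ has $a=t\,g$ and $b=g$, hence lies in $\ker U_{2}$. This proves $M(\mathit{odd})=\ker U_{2}$, and combined with the first step completes the lemma.

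I expect the only genuine obstacle to be the computation $U_{2}(r)=r$: one must check that the four theta-like sums do not interfere, so that ``taking the even part'' is literally the selection of a subfamily of exponents, and that halving those exponents reproduces $r$ exactly. Everything else is formal manipulation of the rank-two $Z/2[r^{2}]$-module structure.
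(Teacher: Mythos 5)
Your proof is correct, but it is a genuinely different route from the paper's: the paper disposes of this lemma with a one-line citation to \cite{2} (Definition 2.6 and Theorem 2.7), where the identification of $M(\mathit{odd})$ with the odd power series in $Z/2[r]$ was established in the modular-forms setting, whereas you give a self-contained computation inside $Z/2[[x]]$. Your two ingredients are sound: the identity $U_{2}(g f^{2})=U_{2}(g)f$ is the standard Frobenius-twist property of $U_{2}$ in characteristic $2$, and the verification $U_{2}(r)=r$ does come down, as you say, to the pairwise disjointness of the exponent families $\{n^{2}\}$, $\{2n^{2}\}$, $\{3n^{2}\}$, $\{6n^{2}\}$ (all of which follows from the irrationality of $\sqrt{2}$, $\sqrt{3}$, $\sqrt{6}$ and $\sqrt{3/2}$), so that taking the even part and halving permutes the four families back onto themselves with no mod-$2$ cancellation. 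From $U_{2}(a(r)^{2}+r\,b(r)^{2})=a(r)+r\,b(r)$ and the transcendence of $r$ (guaranteed since $Z/2[r]$ is the infinite-dimensional space $M$), the conclusion $\ker U_{2}=\{(r^{2}+r)g(r^{2})\}$ follows exactly as you argue. What your approach buys is independence from \cite{2} at the cost of one explicit theta-series computation; what the paper's citation buys is brevity and consistency with the framework of \cite{2}, on which the rest of Section \ref{section4} depends anyway (e.g.\ Lemmas \ref{lemma4.3} and \ref{lemma4.8}). One small remark: your appeal to Lemma \ref{lemma4.2} is not actually needed, since your formula for $U_{2}(h)$ already exhibits the image as lying in $Z/2[r]$.
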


\begin{proof}
See \cite{2}, Definition 2.6 and Theorem 2.7.\qed
\end{proof}

Combining Lemmas \ref{lemma4.4} and \ref{lemma4.5} we find:

\begin{theorem}%[]
\label{theorem4.6}
The kernel $K$ of $U+I: M(\mathit{odd})\rightarrow M(\mathit{odd})$ is just the space consisting of those mod $2$ modular forms of level $\Gamma_{0}(3)$ that are annihilated by $U_{2}$ and by $U_{3}+I$. In particular the $T_{p}$, $p>3$, stabilize $K$.
\end{theorem}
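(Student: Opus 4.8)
The plan is to unwind the two competing descriptions of $K$ until they coincide on the nose, and then to deduce the stability claim from the commutativity of the Hecke operators. The first step is to recall, from the opening of this section, that the space $M$ of mod $2$ modular forms of level $\Gamma_{0}(3)$ is literally the subspace $Z/2[r]$ of $Z/2[[x]]$. Thus ``the mod $2$ modular forms annihilated by $U_{2}$ and by $U_{3}+I$'' means the $f\in Z/2[r]$ with $U_{2}(f)=0$ and $(U_{3}+I)(f)=0$. By Lemma~\ref{lemma4.5}, the condition $U_{2}(f)=0$ is exactly membership in $M(\mathit{odd})$, so the first condition cuts out $M(\mathit{odd})$.

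Next I would invoke Lemma~\ref{lemma4.4}, which states that on all of $Z/2[r]$ the operator $U_{3}$ is the algebraic operator $U$; in particular the two agree on the subspace $M(\mathit{odd})$. Hence for $f\in M(\mathit{odd})$ the equation $(U_{3}+I)(f)=0$ is identical to $(U+I)(f)=0$, and the forms satisfying both conditions are precisely the kernel of $U+I$ on $M(\mathit{odd})$. By Definition~\ref{def3.1} this kernel is exactly $K$, which settles the first assertion; the argument is pure bookkeeping once Lemmas~\ref{lemma4.4} and~\ref{lemma4.5} are in hand.

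For the final clause, I would use the structural input recorded in Definition~\ref{def4.1}, namely that $U_{2}$, $U_{3}$ and each $T_{p}$ with $p>3$ are pairwise commuting operators on $Z/2[[x]]$, together with Lemma~\ref{lemma4.2}, which guarantees that $T_{p}$ carries $Z/2[r]=M$ into itself. Given $f\in K$, I would then compute $U_{2}(T_{p}f)=T_{p}(U_{2}f)=0$ and $(U_{3}+I)(T_{p}f)=T_{p}((U_{3}+I)f)=0$, so $T_{p}f$ is again a mod $2$ modular form of level $\Gamma_{0}(3)$ killed by $U_{2}$ and by $U_{3}+I$; by the first assertion it lies in $K$. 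There is no real obstacle here: the substantive content was already absorbed into the identifications $M=Z/2[r]$, $U_{3}=U$, and $\ker U_{2}=M(\mathit{odd})$, and the only fact genuinely imported from the modular-forms side is that these operators commute, which is what makes the stability of $K$ under the $T_{p}$ immediate.
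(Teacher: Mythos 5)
Your proposal is correct and follows the paper's own route exactly: the paper simply states ``Combining Lemmas \ref{lemma4.4} and \ref{lemma4.5} we find'' the theorem, which is precisely your identification of $M(\mathit{odd})$ with $\ker U_{2}$ and of $U_{3}$ with $U$, and the stability under $T_{p}$ likewise rests on the commutativity asserted in Definition \ref{def4.1} together with Lemma \ref{lemma4.2}. No further comment is needed.
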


\begin{definition}
\label{def4.7}
For $i=1$ or $2$, $p_{3,i}: Z/2[[x]]\rightarrow Z/2[[x]]$ is the map $\sum a_{n}x^{n}\rightarrow \sum_{n\,\equiv\, i\mod{3}} a_{n}x^{n}$.
\end{definition}

\begin{lemma}
\label{lemma4.8} 
$K1$ consists of those elements of $M(\mathit{odd})$ annihilated by $p_{3,2}$, $K5$ of those elements annihilated by $p_{3,1}$.
\end{lemma}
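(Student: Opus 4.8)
The plan is to read the $x$-expansions of $F$, $G$ and their relevant products off Lemma~\ref{lemma4.3}, track exponents modulo $3$ to get the two easy inclusions, and then run a leading-term (``peeling'') argument in the $x$-adic valuation for the reverse inclusions.

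First I would record the residues. By Lemma~\ref{lemma4.3}, $F=\sum_{n\ \mathrm{odd}}x^{n^{2}}$ and $G=F(x^{3})=\sum_{n\ \mathrm{odd}}x^{3n^{2}}$. For odd $n$ one has $n^{2}\equiv 0\mod{3}$ when $3\mid n$ and $n^{2}\equiv 1\mod{3}$ otherwise. Hence every exponent of $F$ is $\equiv 0,1\mod{3}$, every exponent of $G$, and so of $G^{2}$, is $\equiv 0\mod{3}$, and every exponent of $F^{2}=\sum_{n\ \mathrm{odd}}x^{2n^{2}}$ is $\equiv 0,2\mod{3}$, whence every exponent of $F^{2}G$ is $\equiv 0,2\mod{3}$. (These same expansions show $F$, $G$ and $F^{2}G$ are odd, so $K1$ and $K5$ do lie in $M(\mathit{odd})$.) Since every element of $Z/2[G^{2}]$ has all exponents $\equiv 0\mod{3}$, every element of $K1=Z/2[G^{2}]F+Z/2[G^{2}]G$ has exponents $\equiv 0,1\mod{3}$, so $p_{3,2}$ annihilates $K1$; symmetrically $p_{3,1}$ annihilates $K5=Z/2[G^{2}]G+Z/2[G^{2}]F^{2}G$. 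This gives the two easy inclusions.

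For the reverse inclusions I would peel off leading terms. Suppose $0\ne f\in M(\mathit{odd})$ with $p_{3,2}(f)=0$. Being odd with no exponent $\equiv 2\mod{3}$, $f$ has all exponents $\equiv 1,3\mod{6}$, so $\ord(f)$ has one of these forms. The $Z/2$-basis elements $G^{2k}F$ and $G^{2k+1}$ of $K1$ have \ord\ equal to $6k+1$ and $6k+3$, each with leading coefficient $1$; so for the given value of $\ord(f)$ there is a $K1$-element with the same leading term. Adding it to $f$ keeps us inside $M(\mathit{odd})\cap\ker p_{3,2}$ while strictly raising \ord, and iterating produces an $x$-adically convergent expansion $f=a(G^{2})F+b(G^{2})G$ with $a,b\in Z/2[[t]]$.

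It remains to promote $a$ and $b$ to polynomials, and this is the step I expect to be the crux. Here I would use that $f$ is a genuine polynomial in $r$ and that $F$ and $G$ are linearly independent over $Z/2[G^{2}]$ inside the free module $Z/2[r]$ (indeed $F+G=r^{2}+r$ and $G=r^{4}+r^{3}$ occupy disjoint coordinates in the basis $1,r,\ldots,r^{7}$): expanding $f$ in such a basis shows the $G^{2}$-series of its coordinates terminate, forcing $a,b\in Z/2[t]$ and hence $f\in K1$. The identical argument, now with $G^{2k+1}$ (order $6k+3$) and $F^{2}G^{2k+1}$ (order $6k+5$) exhausting the residues $3$ and $5\mod{6}$, handles $K5$ and $p_{3,1}$. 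The main obstacle is exactly this last point: the peeling only manifestly writes $f$ as a power series in $G^{2}$ against $F$ and $G$, so one must invoke the polynomiality of $f$ in $r$ together with the $Z/2[G^{2}]$-independence of $F$ and $G$ to conclude that the coefficients are polynomials, i.e.\ that $f$ lies in $K1$ itself and not merely in its $x$-adic completion.
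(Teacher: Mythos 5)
Your two easy inclusions are fine, and the peeling by $x$-adic order is also sound: the orders $6k+1$ and $6k+3$ of $G^{2k}F$ and $G^{2k+1}$ do exhaust, once each, the orders available to a nonzero element of $M(\mathit{odd})\cap\ker p_{3,2}$, so any such $f$ equals $a(G^{2})F+b(G^{2})G$ with $a,b\in Z/2[[t]]$. (For what it's worth, the paper gives no argument at all here---it simply cites Definition 2.24 and Theorem 2.15 of \cite{2}---so any self-contained proof is necessarily a different route from the paper's.)

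The gap sits exactly where you predicted, and the repair you propose does not work. You want to read off the coordinates of $f$ with respect to the $Z/2[G^{2}]$-basis $1,r,\dots,r^{7}$ of $Z/2[r]$ and compare them with the power-series coordinates $a$ and $a+b$ produced by the peeling. That comparison requires the expansion with coefficients in $Z/2[[G^{2}]]$ to be unique, i.e.\ requires $1,r,\dots,r^{7}$ (or, inside $M(\mathit{odd})$, the four elements $(r^{2}+r)r^{2i}$, $0\le i\le 3$) to remain linearly independent over $Z/2[[G^{2}]]$ inside $Z/2[[x]]$---and they do not. Since $\ord_{x}(G^{2})=6$, Weierstrass division makes $Z/2[[x]]$ free of rank $6$ over $Z/2[[G^{2}]]$, and its odd part free of rank $3$, so eight (resp.\ four) elements must be dependent. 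Concretely, $r^{2}$ is a root of $y^{4}+y^{3}+G^{2}$; modulo $G^{2}$ this is $y^{3}(y+1)$, so Hensel's lemma splits off a linear factor with unit root and leaves $r^{2}$ (which has $G^{2}$-valuation $1/3$) satisfying an irreducible cubic with coefficients in $Z/2[[G^{2}]]$. Multiplying that cubic relation by $r^{2}+r$ gives a nontrivial $Z/2[[G^{2}]]$-relation among the $(r^{2}+r)r^{2i}$. Hence the polynomial coordinates of $f$ and the power-series coordinates $(a,\,a+b,\,0,\,0)$ may differ by a multiple of this relation, and ``the $G^{2}$-series of the coordinates terminate'' does not follow. (That $F$ and $G$ themselves are $Z/2[[G^{2}]]$-independent is true but beside the point: the problem is matching the rank-$2$ completed expansion against the rank-$4$ polynomial one.) To close the argument you need input that sees more than $x$-adic leading terms---for instance the $Z/2[G^{2}]$-module description of $M(\mathit{odd})\cap\ker p_{3,2}$ established in \cite{2}, or a direct check that $p_{3,2}$ is injective on a $Z/2[G^{2}]$-complement of $K1$ in $M(\mathit{odd})$.
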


\begin{proof}
See \cite{2}, Definition 2.24 and Theorem 2.15.\qed
\end{proof}

\begin{corollary}
\label{corollary4.9}
The $T_{p}$, $p\equiv 1\mod{6}$, stabilize $K1$ and $K5$. The $T_{p}$ with $p\equiv 5\mod{6}$ map $K1$ into $K5$ and $K5$ into $K1$. The $T_{p}$ with $p>3$ stabilize both $N2=K1+K5$ and $N1=K1\cap K5$.
\end{corollary}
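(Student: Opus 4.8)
The plan is to reduce the whole statement to Lemma~\ref{lemma4.8}, which identifies $K1$ and $K5$ with the elements of $M(\mathit{odd})$ killed by $p_{3,2}$ and by $p_{3,1}$ respectively. Granting that description, two facts suffice: that each $T_{p}$ (with $p>3$) maps $M(\mathit{odd})$ into itself, and that $T_{p}$ interacts with the projections $p_{3,1},p_{3,2}$ in a way governed by $p\bmod 3$.

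The first fact is quick. By Lemma~\ref{lemma4.5}, $M(\mathit{odd})$ is the kernel of $U_{2}$ on $Z/2[r]$; by Lemma~\ref{lemma4.2}, $T_{p}$ stabilizes $Z/2[r]$; and by Definition~\ref{def4.1} the operators $U_{2}$ and $T_{p}$ commute. Hence for $f\in M(\mathit{odd})$ we get $U_{2}(T_{p}f)=T_{p}(U_{2}f)=0$, so $T_{p}f\in M(\mathit{odd})$.

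The real work is the second fact, and it is the step where I expect the only genuine bookkeeping. Writing $T_{p}(\sum a_{n}x^{n})=\sum a_{pn}x^{n}+\sum a_{n}x^{pn}$, the coefficient of $x^{n}$ in $T_{p}f$ is $a_{pn}$ together with $a_{n/p}$ when $p\mid n$. Since $p^{2}\equiv1\mod{3}$, both indices $pn$ and $n/p$ are $\equiv pn\mod{3}$; and because $p$ is prime to $3$, multiplication by $p$ permutes the residues mod $3$, fixing each when $p\equiv1\mod{3}$ and interchanging $1$ and $2$ (while fixing $0$) when $p\equiv2\mod{3}$. Tracking which residue class of indices feeds which residue class of exponents, I would establish the intertwining identities
\[
p_{3,i}\,T_{p}=T_{p}\,p_{3,i}\ \ (p\equiv1\mod{3}),\qquad p_{3,i}\,T_{p}=T_{p}\,p_{3,3-i}\ \ (p\equiv2\mod{3}),
\]
for $i\in\{1,2\}$, the second asserting that $T_{p}$ swaps the roles of $p_{3,1}$ and $p_{3,2}$.

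With these identities the corollary falls out. A prime $p\equiv1\mod{6}$ satisfies $p\equiv1\mod{3}$, so $T_{p}$ commutes with $p_{3,2}$ and hence preserves $K1=\ker p_{3,2}\cap M(\mathit{odd})$; likewise it preserves $K5$. A prime $p\equiv5\mod{6}$ satisfies $p\equiv2\mod{3}$, so $p_{3,1}(T_{p}f)=T_{p}(p_{3,2}f)$ and $p_{3,2}(T_{p}f)=T_{p}(p_{3,1}f)$; thus $T_{p}$ sends $K1$ into $K5$ and $K5$ into $K1$. In both cases $T_{p}$ carries the pair $\{K1,K5\}$ to itself, so it stabilizes their sum $N2=K1+K5$ and their intersection $N1=K1\cap K5$. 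The one point needing care is the verification of the displayed intertwining identities, especially the handling of the second ($x^{pn}$) term and of the index $n/p$.
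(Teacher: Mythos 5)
Your proof is correct, and it follows the route the paper intends: the paper states Corollary~\ref{corollary4.9} without proof as an immediate consequence of Lemma~\ref{lemma4.8}, and your intertwining identities $p_{3,i}T_{p}=T_{p}p_{3,pi \bmod 3}$ (together with $T_{p}$ preserving $M(\mathit{odd})=\ker U_{2}$) are exactly the bookkeeping being left to the reader. The verification of those identities as you sketch it checks out, since the coefficient of $x^{n}$ in $T_{p}f$ involves only $a_{m}$ with $m\equiv pn\mod{3}$.
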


Now the $T_{p}$ with $p\equiv 1\mod{6}$ act on $K$; by Corollary \ref{corollary4.9} they also act on $K1/N1$.

\begin{lemma}
\label{lemma4.10} 
The bijection $\pr 1 : K\rightarrow K1/N1$ of the last section preserves the action of the $T_{p}$, $p\equiv 1\mod{6}$.
\end{lemma}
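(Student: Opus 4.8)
The plan is to exhibit $\pr 1$ as a composition of maps, each of which is the restriction (or the quotient) induced by a single operator $T_p$ on the ambient ring $Z/2[[x]]$, so that $T_p$-equivariance of $\pr 1$ becomes a formal consequence of the stabilization statements already proved. Concretely, the map $\pr 1 : K \rightarrow K1/N1$ factors as the inclusion $K \hookrightarrow N2$ (Lemma \ref{lemma3.3}), followed by the quotient $N2 \rightarrow N2/N1$, and then the projection $N2/N1 \rightarrow K1/N1$ arising from the decomposition $N2/N1 = K1/N1 \oplus K5/N1$ of Definition \ref{def3.2}. Fix a prime $p \equiv 1\mod{6}$. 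To prove that $\pr 1 \circ T_p = T_p \circ \pr 1$ on $K$, it suffices to check that $T_p$ carries each of the subspaces appearing in this factorization into itself and that it respects the direct-sum decomposition.

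First I would record that $T_p$ stabilizes $K$ (Theorem \ref{theorem4.6}, since $p > 3$) and that it stabilizes both $N2$ and $N1$ (last sentence of Corollary \ref{corollary4.9}). Hence $T_p$ descends to an operator on $N2/N1$, and both the inclusion $K \hookrightarrow N2$ and the quotient $N2 \rightarrow N2/N1$ intertwine the action of $T_p$ on $K$ with its action on $N2/N1$, simply because in each case $T_p$ is the same ambient operator restricted or passed to a quotient.

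Second, because $p \equiv 1\mod{6}$, Corollary \ref{corollary4.9} tells us that $T_p$ stabilizes $K1$ and $K5$ separately, hence stabilizes the summands $K1/N1$ and $K5/N1$ of $N2/N1$. Thus $T_p$ acts diagonally with respect to the decomposition $N2/N1 = K1/N1 \oplus K5/N1$, and in particular commutes with the projection $\pr 1$ onto the first summand. Composing the equivariant maps from the two steps yields $\pr 1 \circ T_p = T_p \circ \pr 1$ on $K$, which is the assertion.

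I expect no computational obstacle: the whole argument is formal, reducing to the bookkeeping that $T_p$ respects precisely the subspaces that define $\pr 1$. The one point deserving emphasis—and the reason the congruence $p \equiv 1\mod{6}$ appears in the hypothesis—is the second step. It is essential that $T_p$ preserve the two summands individually rather than interchanging them; for $p \equiv 5\mod{6}$, Corollary \ref{corollary4.9} shows that $T_p$ swaps $K1$ and $K5$, so $T_p$ would not commute with the projection $\pr 1$ and the lemma would fail in that case.
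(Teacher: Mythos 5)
Your proof is correct and follows essentially the same route as the paper: factor $\pr 1$ as the inclusion $K\subset N2$, the quotient $N2\rightarrow N2/N1$, and the projection onto $K1/N1$, then check $T_p$-equivariance of each piece, with Corollary \ref{corollary4.9} supplying the key fact that $T_p$ for $p\equiv 1\mod{6}$ preserves $K1$ and $K5$ individually. Your closing remark about why the congruence condition is needed matches the paper's setup exactly.
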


\begin{proof}
$\pr 1$ is the composition of $K\subset N2$, $N2\rightarrow N2/N1$ and the projection map $N2/N1\rightarrow K1/N1$ of the sentence preceding Lemma \ref{lemma3.3}. The first two of these maps preserve the action of 
$T_{p}$, $p>3$, while the third preserves the action of $T_{p}$, $p\equiv 1\mod{6}$. (Here we use Lemma \ref{lemma4.8}.)\qed
\end{proof}

\begin{definition}
\label{def4.11}
$D=x+x^{25}+x^{49}+\cdots$ is $p_{3,1}(F)$. $W1$ is spanned by the $D^{k}$, $k\equiv 1\mod{6}$, $W5$ by the $D^{k}$, $k\equiv 5\mod{6}$.
\end{definition}

Now let $X$ and $Y$ be the Hecke operators $T_{7}$ and $T_{13}$. These act on a number of spaces we've considered, and in particular on $W1$, $W5$, $K1/N1$, $K5/N1$ and $K$. 

\begin{lemma}
\label{lemma4.12}
$W5$ has ``a basis adapted to $X$ and $Y$'' with $m_{0,0} = D^{5}$. More precisely there is a $Z/2$-basis $m_{i,j}$ ($i\ge 0, j\ge 0$) of $W5$ such that:
%\vspace{-5ex}
\begin{enumerate}
\item[(1)] $m_{0,0} = D^{5}$.
\item[(2)] $Xm_{i,j} = m_{i-1,j}$ or $0$ according as $i>0$ or $i=0$.
\item[(3)] $Ym_{i,j} = m_{i,j-1}$ or $0$ according as $j>0$ or $j=0$.
\end{enumerate}

Similarly, $W1$ has a basis adapted to $X$ and $Y$ with $m_{0,0}=D$.
\end{lemma}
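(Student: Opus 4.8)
The statement is equivalent to saying that $W5$ is co-free of rank one over the completed Hecke algebra $Z/2[[X,Y]]$ generated by $X=T_{7}$ and $Y=T_{13}$: the $m_{i,j}$ are to be the basis dual to the monomials $X^{i}Y^{j}$, so that $X$ and $Y$ lower the two indices and annihilate the bottom element $m_{0,0}$. The plan is to construct the $m_{i,j}$ by induction on $i+j$, starting from $m_{0,0}=D^{5}$, using explicit recursions for the action of $T_{7}$ and $T_{13}$ on the power basis $\{D^{k}:k\equiv 5\bmod 6\}$ of $W5$.

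First I would record the arithmetic of $D=\sum_{\gcd(n,6)=1}x^{n^{2}}$: since every $n$ prime to $6$ satisfies $n^{2}\equiv 1\pmod{24}$, each $D^{k}$ is supported on exponents $\equiv k\pmod{24}$ with leading term $x^{k}$ of coefficient $1$. This grades $W5$ by $x$-adic order and, consistent with Corollary~\ref{corollary4.9}, shows that $T_{7}$ and $T_{13}$ preserve the class $k\equiv 5\bmod 6$. The base case is then a direct computation. For $W1$ it is transparent: $U_{7}(D)=\sum_{\gcd(i,6)=1}x^{7i^{2}}=V_{7}(D)$, whence $T_{7}(D)=0$, and likewise $T_{13}(D)=0$, so $m_{0,0}=D$ is killed by $X$ and $Y$. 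For $W5$ the same conclusion $T_{7}(D^{5})=T_{13}(D^{5})=0$ follows from the corresponding identity $U_{7}(D^{5})=V_{7}(D^{5})$, which rests on an involution pairing the representations of $7N$ as sums of five squares prime to $6$; one checks it on the first coefficients (e.g.\ $245=13^{2}+7^{2}+5^{2}+1^{2}+1^{2}$ and its companions give an odd total, matching the coefficient $1$ of $x^{5}$ in $D^{5}$).

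The inductive step produces $m_{i,j}$ as a finite $Z/2$-combination of powers $D^{k}$ with $Xm_{i,j}=m_{i-1,j}$ and $Ym_{i,j}=m_{i,j-1}$, the two prescriptions being compatible; once the recursion for $T_{7}(D^{k})$ and $T_{13}(D^{k})$ in the power basis is in hand, this is a triangular linear problem. The genuine content --- and the step I expect to be the main obstacle --- is proving that this problem is always solvable, i.e.\ that $X$ and $Y$ act freely enough to fill out a full two-dimensional grid rather than collapsing to a one-parameter family. This is the analogue for $D$ of the Nicolas--Serre computation for $T_{3},T_{5}$ in level $1$; I would reduce it to the independence of the images of $T_{7}$ and $T_{13}$ in the cotangent space $(X,Y)/(X,Y)^{2}$, together with a dimension count in the spirit of the proof of Theorem~\ref{theorem2.10}, matching the number of basis elements produced up to a given index against the dimension of the corresponding finite-dimensional piece of $W5$. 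The assertion for $W1$ is obtained by the identical argument, with $D$ in place of $D^{5}$ as $m_{0,0}$.
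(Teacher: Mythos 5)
The paper does not prove this lemma from scratch: the first assertion is quoted directly from \cite{2} (Corollary 4.13), and the assertion for $W1$ is then deduced by transport along the bijection $T_{5}:W5\rightarrow W1$ of \cite{2}, Theorem 4.20, which takes $D^{5}$ to $D$ and commutes with the $T_{p}$, $p\equiv 1\mod{6}$. Your base-case computations are essentially sound --- $U_{7}(D)=V_{7}(D)$ does give $T_{7}(D)=0$, and similarly for $T_{13}(D)$ --- but these are the easy part of the statement.

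The genuine gap is the one you yourself flag: the existence of the full two-dimensional grid $m_{i,j}$. Saying that you ``would reduce it to the independence of the images of $T_{7}$ and $T_{13}$ in the cotangent space, together with a dimension count'' is a statement of intent, not an argument: no recursion for $T_{7}(D^{k})$ or $T_{13}(D^{k})$ in the power basis is actually derived, no dimension count is carried out, and nothing is said about why the triangular system is solvable at every stage rather than collapsing. That is precisely the hard content of \cite{2}, Corollary 4.13 (the level-$3$ analogue of the Nicolas--Serre theorem), and it occupies a substantial part of that paper; it cannot be waved through here. In addition, checking $U_{7}(D^{5})=V_{7}(D^{5})$ ``on the first coefficients'' via one representation of $245$ as a sum of five squares is a spot check, not a proof --- the involution on representations that you allude to must actually be exhibited. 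As written, your proposal establishes only that $m_{0,0}$ can be taken to be $D$ (and plausibly $D^{5}$) and leaves the existence of all the remaining $m_{i,j}$ unproved; either supply the full inductive argument or, as the paper does, cite the result from \cite{2}.
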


\begin{proof}
The first result is \cite{2}, Corollary 4.13. And \cite{2}, Theorem 4.20 shows that $T_{5}:W5\rightarrow W1$ is bijective, takes $D^{5}$ to $D$ and preserves the action of $T_{p}$, $p\equiv 1\mod{6}$. So we deduce the second result from the first.\qed
\end{proof}

\begin{theorem}%[]
\label{theorem4.13} \hspace{1em}\\%to wrap line
\vspace{-5ex}
\begin{enumerate}
\item[(a)] $K1/N1$ has a basis adapted to $X$ and $Y$ with $m_{0,0}=F$.
\item[(b)] $K5/N1$ has a basis adapted to $X$ and $Y$ with $m_{0,0}=F^{2}G$.
\end{enumerate}
\end{theorem}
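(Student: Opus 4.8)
The plan is to deduce both parts of Theorem \ref{theorem4.13} from the corresponding statements for $W1$ and $W5$ in Lemma \ref{lemma4.12}, by exhibiting Hecke-equivariant isomorphisms $W1 \to K1/N1$ and $W5 \to K5/N1$ that send the base elements $D$ and $D^5$ to $F$ and $F^2G$ respectively. Since $X=T_7$ and $Y=T_{13}$ both have $p\equiv 1\mod 6$, they stabilize each of $W1$, $W5$, $K1/N1$, $K5/N1$ by Corollary \ref{corollary4.9} (and the remark preceding Lemma \ref{lemma4.12}). If I can produce such an isomorphism $\theta$, then transporting the adapted basis $\{m_{i,j}\}$ of $W1$ (resp. $W5$) through $\theta$ yields a basis of $K1/N1$ (resp. $K5/N1$) that still satisfies conditions (1)--(3), because $\theta X = X\theta$ and $\theta Y = Y\theta$ force $X\theta(m_{i,j}) = \theta(m_{i-1,j})$ and similarly for $Y$, while $\theta(m_{0,0})$ is the prescribed base element.

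First I would identify the natural candidate for $\theta$ and verify it respects the $T_p$ action for $p\equiv 1\mod 6$. The projection $p_{3,1}$ of Definition \ref{def4.7} is the obvious link: by Lemma \ref{lemma4.8}, $K1$ is the kernel of $p_{3,2}$ inside $M(\mathit{odd})$, so $p_{3,1}$ restricted to $K1$ is injective, and $D = p_{3,1}(F)$ by Definition \ref{def4.11} while $W1$ is built from the powers $D^k$. So I expect $p_{3,1}$ (or its reduction modulo $N1$) to carry $K1/N1$ onto $W1$, sending the class of $F$ to $D$, and I would need the analogous map sending the class of $F^2G$ to $D^5$ for the $K5$ case. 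The key compatibility is that $p_{3,1}$ commutes with $T_p$ for $p\equiv 1\mod 6$: this should follow from the explicit formula in Definition \ref{def4.1} together with the fact that such $p$ act ``within'' each residue class mod $3$, which is exactly the content making Corollary \ref{corollary4.9} work.

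The main obstacle will be pinning down that $\theta$ is genuinely a bijection $K1/N1 \to W1$ and correctly normalized, i.e. that $N1$ is precisely the kernel of the composite and that $p_{3,1}(F) = D$ has the right leading behavior to match $m_{0,0}$. I would handle this by checking the statement on the $Z/2$-basis $\{G^{2m}F : m\ge 0\}$ of $K1/N1$ exhibited in the proof of Lemma \ref{lemma3.4}: since $G = F(x^3)$ by Lemma \ref{lemma4.3}, the element $G^{2m}F$ involves $F$ multiplied by a power series in $x^3$, so applying $p_{3,1}$ should land it cleanly in the span of odd powers of $D$, with the degree bookkeeping showing the map is a bijection onto $W1$. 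The $K5/N1$ case is parallel, using that $F^2G$ reduces under the appropriate projection to $D^5$; here I would lean on \cite{2}, Theorem 4.20 and the structure of $K5$ as in Lemma \ref{lemma4.8}. Once equivariance and bijectivity are established, the transport of the adapted basis is formal, and both (a) and (b) follow immediately.
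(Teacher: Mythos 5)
Your overall strategy is the paper's: both parts are obtained by transporting the adapted bases of $W1$ and $W5$ from Lemma \ref{lemma4.12} through an isomorphism that commutes with $T_p$ for $p\equiv 1 \bmod 6$ and matches $F$ with $D$ and $F^2G$ with $D^5$. The paper's entire proof is a citation: such an isomorphism of $N2/N1$ with $W1\oplus W5$, commuting with all $T_p$, $p>3$, and carrying $K1/N1$ to $W1$ and $K5/N1$ to $W5$, is taken ready-made from the remark following Theorem 2.17 of \cite{2}. Your formal ``transport the basis'' step is correct, as is the observation that $p_{3,i}$ commutes with $T_p$ when $p\equiv 1\bmod 3$.

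The gap is in your construction of the isomorphism, which is exactly the content the paper outsources. Three points. First, $p_{3,1}$ restricted to $K1$ is \emph{not} injective: $G=F(x^{3})$ is a power series in $x^{3}$, so all of $N1=Z/2[G^{2}]\cdot G$ is killed by $p_{3,1}$; you must work with the induced map on $K1/N1$ from the outset (you do pass to the quotient later, so this is repairable). Second, for part (b) the map $p_{3,1}$ cannot serve at all --- by Lemma \ref{lemma4.8} it annihilates $K5$ outright --- so the ``analogous map'' has to be $p_{3,2}$, and this needs to be said and checked separately. Third, and most seriously: on the basis $\{G^{2m}F\}$ of $K1/N1$ one computes $p_{3,1}(G^{2m}F)=G^{2m}D$ (multiplication by a series in $x^{3}$ commutes with $p_{3,1}$), and while an order-of-vanishing count shows these images are independent with the same leading terms $x^{6m+1}$ as $D^{6m+1}$, nothing in your argument shows that $G^{2m}D$ actually \emph{lies in} $W1=\mathrm{span}\{D^{k}: k\equiv 1\bmod 6\}$. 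That containment is a nontrivial identity relating $G$ and $D$; it is precisely what the cited remark in \cite{2} supplies, and ``degree bookkeeping'' alone cannot produce it. Until it is established, neither the fact that your map lands in $W1$ nor its surjectivity onto $W1$ follows, and the same issue recurs for $p_{3,2}$ and $W5$.
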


\begin{proof}
In the remark following the proof of \cite{2}, Theorem 2.17, we constructed an isomorphism of $N2/N1$ with $W=W1\oplus W5$ which commutes with the $T_{p}$, $p>3$. This isomorphism takes $F$ to $D$, $F^{2}G$ to $D^{5}$, $K1/N1$ to $W1$ and $K5/N1$ to $W5$. So the results follow from Lemma \ref{lemma4.12}.
\qed
\end{proof}

We now prove the main theorem---$K$ (which by Theorem \ref{theorem4.6} consists of those mod $2$ modular forms of level $\Gamma_{0}(3)$ annihilated by $U_{2}$ and $U_{3}+I$) has a basis adapted to $T_{7}$ and $T_{13}$ with $m_{0,0}=F+G$, and as a consequence the Hecke algebra attached to $K$ is a power series ring in $T_{7}$ and $T_{13}$.

Note first that $\pr 1$ takes the element $F+G$ of $K$ to $F$. Then Theorem \ref{theorem4.13}, combined with Lemma \ref{lemma4.10}, shows that $K$ has a basis adapted to $T_{7}$ and $T_{13}$ with $m_{0,0}=F+G$. It also follows that $K$ has the structure of $Z/2[[X,Y]]$-module with $X$ and $Y$ acting by $T_{7}$ and $T_{13}$, and that this action is faithful. Now each $T_{p}$, $p>3$, stabilizes $K$. It remains to show that $T_{p}:K\rightarrow K$ is multiplication by some element of $(X,Y)$. Now $T_{p}$ commutes with $T_{7}$ and $T_{13}$ and so is $Z/2[[X,Y]]$-linear. The existence of an adapted basis for $K$ then shows (see the proof of \cite{2}, Theorem 4.16) that $T_{p}:K\rightarrow K$ is multiplication by some $u$ in $Z/2[[X,Y]]$. Since $T_{p}(m_{0,0})=T_{p}(F+G)=0$, $u$ is in $(X,Y)$, and we're done.

We conclude this note by comparing two Hecke algebras. The inclusion of $K$ in $N2$ gives a map $K\rightarrow N2/N1$ which is 1--1. The $T_{p}$, $p>3$, act both on $K$ and $N2/N1$ and we have corresponding Hecke algebras $\mathrm{HE}(K)$ and $\mathrm{HE}(N2/N1)$. The inclusion of $K$ in $N2/N1$ gives an onto ring homomorphism $\mathrm{HE}(N2/N1)\rightarrow \mathrm{HE}(K)$ taking $T_{p}$ to $T_{p}$ for all $p>3$. Now in \cite{2} we've shown that $\mathrm{HE}(N2/N1)$ is a power series ring in $T_{7}$ and $T_{13}$ with an element $\varepsilon$ of square $0$ adjoined, and that $\varepsilon$ can be taken to be $T_{5}+\lambda(T_{7},T_{13})$ for a certain two-variable power series, $\lambda$. Since $\mathrm{HE}(K)$ is, as we've shown, a two-variable power series ring, the kernel of $\mathrm{HE}(N2/N1)\rightarrow \mathrm{HE}(K)$ is a height $0$ prime ideal of $\mathrm{HE}(N2/N1)$ which can only be $(\varepsilon)$. Let $\mathrm{HE}(N2/N1)_{\mathrm{red}}$ be the quotient of $\mathrm{HE}(N2/N1)$ by its nilradical. We've shown:

\begin{theorem}%[]
\label{theorem4.14}
There is an isomorphism of $\mathrm{HE}(N2/N1)_{\mathrm{red}}$ with $\mathrm{HE}(K)$ taking $T_{p}$ to $T_{p}$ for each $p>3$.
\end{theorem}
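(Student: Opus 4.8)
The plan is to obtain Theorem~\ref{theorem4.14} by comparing $\mathrm{HE}(N2/N1)$ and $\mathrm{HE}(K)$ through the restriction homomorphism induced by the inclusion of $K$ into $N2/N1$, and then identifying the kernel of that homomorphism with the nilradical, using the structure results already established.

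First I would build a surjective ring homomorphism $\mathrm{HE}(N2/N1)\to\mathrm{HE}(K)$. By Lemma~\ref{lemma3.3} we have $K\subset N2$, and by Corollary~\ref{corollary4.9} the operators $T_p$ ($p>3$) stabilize $N1$; hence the composite $K\hookrightarrow N2\to N2/N1$ is an injection commuting with every $T_p$, which realizes $K$ as a $T_p$-stable subspace of $N2/N1$. Restricting an endomorphism of $N2/N1$ to this invariant subspace is a ring homomorphism, and it sends each $T_p$ on $N2/N1$ to the corresponding $T_p$ on $K$. Since the latter topologically generate $\mathrm{HE}(K)$, the restriction map lands onto $\mathrm{HE}(K)$ and carries $T_p$ to $T_p$ for every $p>3$.

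Next I would record the two structures. On the source side, \cite{2} gives $\mathrm{HE}(N2/N1)=Z/2[[T_7,T_{13}]]$ with a square-zero element $\varepsilon=T_5+\lambda(T_7,T_{13})$ adjoined; since $Z/2[[T_7,T_{13}]]$ is a domain and $\varepsilon^2=0$, its nilradical is exactly $(\varepsilon)$, so $\mathrm{HE}(N2/N1)_{\mathrm{red}}=\mathrm{HE}(N2/N1)/(\varepsilon)=Z/2[[T_7,T_{13}]]$. On the target side, the main theorem just proved gives $\mathrm{HE}(K)=Z/2[[T_7,T_{13}]]$, a two-dimensional domain, hence reduced.

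The hard part will be pinning down the kernel $P$ of the surjection. Because $\mathrm{HE}(K)$ is a domain, $P$ is prime; because $\mathrm{HE}(K)$ is reduced, $P$ contains the nilradical $(\varepsilon)$. A Krull-dimension count then forces equality: $\mathrm{HE}(N2/N1)/P\cong\mathrm{HE}(K)$ has dimension $2$, as does $\mathrm{HE}(N2/N1)/(\varepsilon)$, so $P$ is a height-$0$ prime, and the only such prime is the unique minimal prime $(\varepsilon)$. Therefore the surjection factors through an isomorphism $\mathrm{HE}(N2/N1)_{\mathrm{red}}=\mathrm{HE}(N2/N1)/(\varepsilon)\to\mathrm{HE}(K)$ taking $T_p$ to $T_p$ for each $p>3$, which is the assertion. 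Everything hinges on already knowing that $\mathrm{HE}(K)$ is a \emph{full} two-variable power series ring, so that $P$ cannot be a prime of positive height; this is precisely what the adapted-basis argument of the preceding main theorem secures.
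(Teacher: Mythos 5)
Your proposal is correct and follows essentially the same route as the paper: the inclusion $K\hookrightarrow N2/N1$ induces a surjection $\mathrm{HE}(N2/N1)\to\mathrm{HE}(K)$ sending $T_p$ to $T_p$, and since $\mathrm{HE}(K)$ is a two-variable power series ring (hence a two-dimensional domain), the kernel is a height-$0$ prime, which can only be $(\varepsilon)$, i.e.\ the nilradical. Your write-up merely makes explicit a few steps the paper leaves implicit (that $(\varepsilon)$ is the unique minimal prime, and the dimension count), but the argument is the same.
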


\begin{remark*}{Remark}
Since $\varepsilon$ annihilates $K$ it annihilates the image of $K$ in $N2/N1$. So this image $\subset \varepsilon\cdot(N2/N1)$. It's easy to see that the image is all of $\varepsilon\cdot(N2/N1)$.
\end{remark*}

%%%%%%%%%
% The Appendices part is started with the command \appendix;
% appendix sections are then done as normal sections
% \appendix

% \section{}
% \label{}

\end{document}